\newcommand{\bi}{{\bf i}}
\newcommand{\bj}{{\bf j}}
\newcommand{\bk}{{\bf k}}
\newcommand{\bc}{{\mathbb C}}
\newcommand{\br}{{\mathbb R}}
\newcommand{\bh}{{\mathbb H}}
\newtheorem{thm}{Theorem}[section]
\newtheorem{lem}{Lemma}[section]
\newtheorem{pro}{Proposition}[section]
\newtheorem{rem}{Remark}[section]
\newtheorem{exam}{Example}[section]
\newtheorem{defi}{Definition}[section]
\begin{document}

\title{Quadratic unilateral polynomials over split quaternions}
\author{ Wensheng Cao \\
School of Mathematics and Computational Science,\\
Wuyi University, Jiangmen, Guangdong 529020, P.R. China\\
e-mail: {\tt wenscao@aliyun.com}}
\date{}
\maketitle

\bigskip
{\bf Abstract} \,\,   In this paper,  we derive
explicit formulas for computing the roots of $ax^{2}+bx+c=0$ with $a$ being not invertible in split  quaternion algebra. We also use  the approach developed by Opfer, Janovska and Falcao etc. to verify our results when the corresponding companion polynomials $C(x)\neq 0$.

\vspace{2mm}\baselineskip 12pt

{\bf Keywords and phrases:} \  Split quaternion, Quadratic formula, Zero divisor, Solving polynomial equation

{\bf Mathematics Subject Classifications  (2010):}\ \ {\rm 11C08; 11R52; 12Y05; 15A66}

\section{Introduction}
\subsection{Split quaternions}
Let $\br, \bc, \bh$ or $\bh_s$ be respectively the set of  real numbers, complex numbers, quaternions or split quaternions.   The quaternion algebra $\bh$  and split quaternion algebra $\bh_s$  are non-commutative extensions of the complex numbers.
$\bh_s$ can be represented as
$$\bh_s=\{x=x_0+x_1\bi+x_2\bj+x_3\bk,x_i\in \br,i=0,1,2,3\},$$
where $1,\bi,\bj,\bk$ are basis of $\bh_s$ satisfying the following multiplication rules:
\begin{table}[h]  \centering
	\caption{The multiplication table of $\bh_s$}
	\vspace{3mm}
	\begin{tabular}{c|cccc}
	$\bh_s$	&1& $\bi$ &  $\bj$  &  $\bk$\\
		\hline
		1 & 1& $\bi$ &  $\bj$  &  $\bk$\\
		$\bi$ &$\bi$ &-1 & $\bk$ & -$\bj$ \\
		$\bj$&$\bj$&-$\bk$ &1& -$\bi$  \\
		$\bk$	&$\bk$&$\bj$ & $\bi$ & 1
	\end{tabular}
\end{table}

Let $\bar{x}=x_0-x_1\bi-x_2\bj-x_3\bk$, $$\Re(x)=(x+\bar{x})/2=x_0,\ \Im(x)=(x-\bar{x})/2=x_1\bi+x_2\bj+x_3\bk$$ be  respectively the conjugate, real part and  imaginary part of $x\in\bh_s$.
Each  element in $\bh_s$ can be expressed  as
$$x=(x_0+x_1\bi)+(x_2+x_3\bi)\bj =z_1+z_2\bj,\mbox{ where }z_1=x_0+x_1\bi,z_2=x_2+x_3\bi\in \bc.$$
This implies that $$\bh_s=\bc\oplus\bc\bj \mbox{ and }\bj z=\bar{z}\bj, \forall z\in \bc.$$
For $x=x_0+x_1\bi+x_2\bj+x_3\bk=z_1+z_2\bj\in \bh_s$, we define
\begin{equation}\label{Ix}I_x=\bar{x}x=x\bar{x}=x_0^2+x_1^2-x_2^2-x_3^2=|z_1|^2-|z_2|^2\end{equation}
and  \begin{equation}\label{kp}K(x)=\Im(x)^2=-x_1^2+x_2^2 + x_3^2=\Re(x)^2-I_x.\end{equation}
It can be easily verified that \begin{equation} \overline{xy}=\bar{y}\bar{x},\  I_{yx}=I_yI_x, \forall x,y\in\bh_s.\end{equation}
 Unlike the Hamilton quaternion algebra, the split quaternion algebra contains nontrivial zero divisors.
The set of zero divisors is denoted by  \begin{equation}
	Z(\bh_s)=\{x\in \bh_s:I_x=0\}.
\end{equation}
Note that   $$\Re(xy)=\Re(yx)=x_0y_0-x_1y_1+x_2y_2+x_3y_3,\forall x,y\in \bh_s.$$
For $x=x_0+x_1\bi+x_2\bj+x_3\bk,y=y_0+y_1\bi+y_2\bj+y_3\bk\in \bh_s$,
we  define    \begin{equation}\label{inp2}\left\langle x,y \right\rangle=x_0y_0+x_1y_1-x_2y_2-x_3y_3.\end{equation}
 For the sake of simplification, we  denote $$\left\langle x,y \right\rangle:=P_{x,y}=P_{xy}.$$
Accordingly we have
$$I_x=\left\langle x,x\right\rangle=P_{xx},\ \Re({\bar{y}x})=\Re({\bar{x}y})
=\Re({y\bar{x}})=\left\langle x,y \right\rangle=P_{xy}=P_{yx}.$$
	We say that two split quaternions $a,b\in \bh_s$  are similar
if and only if there exists a $q\in \bh_s-Z(\bh_s)$ such that $qa=bq$. 
Similarity is an  equivalence relation in split quaternions.  Let 
\begin{equation}[\lambda]_{\sim}=\{q^{-1}\lambda q:\forall q\in \bh_s\}\end{equation}
be the similarity class of $\lambda$.
\begin{pro}(c.f. \cite[Theorem 4.3]{cao})\label{thmsim}
	Two split quaternions $a,b \in \bh_s-\br$ are similar if and only if
	$$\Re(a)=\Re(b),\, K(a)=K(b).$$
\end{pro}
We mention that two split quaternions $a_0$ and $a_0+\bi+\bj$ are not similar.
We define the quasisimilar class of $q$ as the following set \begin{equation}[q]=\{p\in \bh_s: \Re(p)=\Re(q), I_p=I_q\}.\end{equation}

\begin{pro}
	If $q\in \br$ then $[q]_{\sim}=\{q\}\subset [q]$; if $q\in \bh_s-\br$ then $[q]_{\sim}=[q].$ 
\end{pro}

\subsection{Niven's algorithm over  $\bh_s$}

The problem of solving quaternionic quadratic equations for quaternions  was  first approached by Niven \cite{niven}. In \cite{Opfer10,sero01}, Niven’s algorithm is tailored for finding zeros of unilateral
polynomials $$f(x)=\sum_{j=0}^{n}a_jx^j, z,a_j\in \bh, \mbox{ where } a_n=1, a_0\neq 0.$$

Recently, Niven's  algorithm has been developed by  Falcao, Opfer and Janovska etc.\cite{Ireneu,Irene,Opfer17,Opfer18} to solve the unilateral polynomials over $\bh_s$. We sketch Niven's algorithm over  $\bh_s$ as follows.

In the set of   polynomials of the form
\begin{equation}f(x)=\sum_{j=0}^{n}a_jx^j, a_j\in \bh_s,\end{equation}
we define the addition and multiplication of such polynomials as in the commutative case where the variable commutes with the coefficients.
With  the two  operations, this set becomes a ring, referred to as the ring of unilateral polynomials in $\bh_s$ and denoted by $\bh_s[x]$. For $f(x)\in \bh_s[x]$, we define the $$\overline{f(x)}=\sum_{j=0}^{n}\overline{a_j}x^j.$$
The companion  polynomial $C(x)$ of $f(x)$ is defined as
\begin{equation}C(x)=\sum_{j,k=0}^{n}\overline{a_j}a_k\,x^{j+k}.\end{equation}
Note that in the two operations of the ring $\bh_s[x]$, \begin{equation}C(x)=f(x)\overline{f(x)}=\overline{f(x)}f(x).\end{equation}
We mention that $C(x)$ is a polynomial with real coefficients of degree at most $2n$.

For each quasisimilar  class $[q]$, we define real coefficient quadratic polynomial
\begin{equation}\Psi_{[q]}(x)=x^2-2\Re(q)x+I_q.\end{equation}
Obviously \begin{equation}\Psi_{[q]}(p)=0,\forall p\in [q].\end{equation}

In \cite{Ireneu,Irene,Opfer17,Opfer18}, Falcao, Opfer and Janovska etc. considered the unilateral
polynomials
\begin{equation}\label{pz}f(x)=\sum_{j=0}^{n}a_j x^j, x,a_j\in \bh_s, \mbox{ where } a_n, a_0 \mbox{  are invertible}.
\end{equation}

The mechanism of Niven's algorithm can be described by the following proposition.

\begin{pro}\label{pro1.1}(c.f.\cite[Theorem 3.7]{Irene}) Let \begin{equation}Z(f)=\{q\in\bh_s:f(q)=0\}.\end{equation}
	If $q\in Z(f)$,  then $\Psi_{[q]}(x)$ is a divisor of $C(x)$ in complex number field. That is $$C(x)=h(x)\Psi_{[q]}(x),\,\, h(x)\in \bc[x].$$
\end{pro}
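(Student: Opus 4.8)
The plan is to reduce the claim to a single \emph{scalar} divisibility statement by exploiting that the indeterminate $x$ is central in $\bh_s[x]$. First I would record the elementary but essential observation that $c(x)$ has \emph{real} coefficients, so that factoring it in $\bc[x]$ (in fact in $\br[x]$) makes sense: the coefficient of $x^m$ in $c(x)$ is $\sum_{j+k=m}\overline{a_j}a_k$, and pairing the term $(j,k)$ with $(k,j)$ gives $\overline{a_j}a_k+\overline{a_k}a_j=2\Re(\overline{a_j}a_k)\in\br$, while the diagonal term $(j,j)$ equals $I_{a_j}\in\br$. Thus $c\in\br[x]\subset\bc[x]$.

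The crux is a product–evaluation lemma. Because the coefficients commute with the variable, for any factorization $f(x)=g(x)h(x)$ in $\bh_s[x]$ and any $q\in\bh_s$ one has
\[
f(q)=\sum_{i,j}g_i h_j q^{i+j}=\sum_i g_i\Bigl(\sum_j h_j q^{j}\Bigr)q^{i}=\sum_i g_i\,h(q)\,q^{i},
\]
so that $h(q)=0$ forces $f(q)=0$. Applying this to the factorization $c=\overline{p}\,p$, where the \emph{right} factor is $p$, and using the hypothesis $p(q)=0$, I conclude $c(q)=0$. Note the asymmetry here is genuine: since evaluation is not multiplicative in the noncommutative setting, I must use the order $\overline{p}\,p$ rather than $p\,\overline{p}$ so that the factor known to vanish at $q$ sits on the right.

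Next I would divide $c$ by $\Psi_{[q]}$ inside $\br[x]$, writing $c(x)=Q(x)\Psi_{[q]}(x)+R(x)$ with $Q,R\in\br[x]$ and $R(x)=\alpha x+\beta$, $\alpha,\beta\in\br$. Evaluating at $q$ and using $\Psi_{[q]}(q)=0$ (which holds since $q\in[q]$), the product–evaluation lemma gives $\bigl(Q\Psi_{[q]}\bigr)(q)=0$, and $\br$-additivity of evaluation yields $R(q)=c(q)=0$, i.e. $\alpha q+\beta=0$. Provided $q\notin\br$, the elements $1$ and $q$ are linearly independent over $\br$, forcing $\alpha=\beta=0$, hence $R\equiv0$ and $\Psi_{[q]}\mid c$ in $\bc[x]$.

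The step I expect to require the most care is the degenerate case $q\in\br$, where $\Psi_{[q]}(x)=(x-q)^2$ and the linear-independence argument collapses; here I must instead show that $q$ is a \emph{double} root of $c$. This follows because for real $q$ one has $\overline{p}(q)=\overline{p(q)}=0$ as well, so both $(x-q)\mid p$ and $(x-q)\mid\overline{p}$ in $\bh_s[x]$, whence $(x-q)^2\mid \overline{p}\,p=c$. Apart from this edge case, the entire argument rests on the centrality of $x$ and the reality of $c$, which are the two features making the noncommutative divisibility behave like the classical one.
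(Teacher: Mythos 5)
Your proof is correct, but it takes a genuinely different route from the paper's. The paper (in its restatement of this proposition as the lemma of Section 5) argues by factorization: from $p(q)=0$ it extracts a right linear factor, $p(x)=h(x)(x-q)$ with $h\in\bh_s[x]$ (citing \cite{sch20}), and then computes
$$c(x)=p(x)\overline{p(x)}=h(x)(x-q)(x-\bar q)\overline{h(x)}=h(x)\overline{h(x)}\,\Psi_{[q]}(x),$$
using that $(x-q)(x-\bar q)=\Psi_{[q]}(x)$ has real coefficients and is therefore central; the quotient $Q=h\overline{h}$ is itself a companion polynomial, hence real, so the divisibility in $\bc[x]$ is immediate. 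You replace this by an evaluation-plus-remainder scheme: the product--evaluation lemma gives $c(q)=0$, reality of $c$ lets you divide by $\Psi_{[q]}$ inside $\br[x]$, and linear independence of $1,q$ over $\br$ kills the remainder. The paper's route buys uniformity (no case distinction) and exhibits the quotient explicitly; your route buys elementarity in the generic case $q\notin\br$, where nothing beyond the evaluation lemma and classical real polynomial division is needed, and it makes transparent that $\Psi_{[q]}$ is simply the minimal polynomial of $q$ over $\br$. The cost is the separate case $q\in\br$, where you fall back on exactly the factorization tool the paper uses from the start; there, strictly speaking, you should also note that the quotient of $c$ by the monic real polynomial $(x-q)^2$ taken in $\bh_s[x]$ has real coefficients (immediate by comparing coefficients recursively), so that the divisibility you obtain in $\bh_s[x]$ really is a divisibility in $\bc[x]$, as the statement requires.
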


 For such a $[q]$, one can perform the following
$$f(x)=h(x)\Psi_{[q]}(x)+A_q+B_qx.$$
One then can say that the element $y\in [q]$ satisfying $A_q+B_qy=0$ belongs to $Z(f)$.
That is $$Z(f)=\bigcup_{[q]}\{y\in [q]:A_q+B_qy=0\}.$$

 The essential principle of Niven's algorithm using the companion polynomial is that   we can figure out the $\Re(q)$ and $I_q$ by the companion polynomial  and then search for the solutions in these quasisimilar classes $[q]$.

\subsection{Quadratic equation in $\bh_s$}

 The   quadratic equation $$ax^2+bx+c=0,a,b,c\in\bh_s$$ 
 can be reformulated as $$x^2+a^{-1}bx+a^{-1}c=0$$ if $a$ is invertible. Such quadratic equation has been considered in \cite{caoaxiom,Ireneu,Irene,Opfer18,Opfer17}.
    
  In this paper, we will focus on deriving explicit formulas of the roots of  the quadratic equation
\begin{equation}\label{eqcons}ax^2+bx+c=0, a\in Z(\bh_s)-\{0\}.\end{equation}
The  companion  polynomial of $f(x)$ is
$$C(x)=(a\bar{b}+b\bar{a})x^3+(a\bar{c}+c\bar{a}+I_b)x^2+(c\bar{b}+b\bar{c})x+I_c=0.$$
That is
\begin{equation}\label{czpoly}C(x)=2P_{ab}x^3+(2P_{ac}+I_b)x^2+2P_{bc}x+I_c=0.\end{equation}

\begin{exam} The  companion  polynomial of $$f(x)=(1+\bj)x^2 +(\bi-\bk)x-1+\bi-\bj-\bk= 0$$ is $C(x)\equiv 0$.
\end{exam}

The above example shows that we need to face the intricate  problem (\ref{eqcons}) without the help of the corresponding companion  polynomial.
Since $a$ is not invertible and $c$ is arbitary, the above quadratic equation has not been considered in  \cite{caoaxiom,Ireneu,Irene,Opfer18,Opfer17}.

To reduce the number of  parameters in $ax^2+bx+c=0$  and  simplify our consideration, we  have the following proposition.
\begin{pro}\label{reducepro2}(c.f.\cite[Section 7]{Opfer14}) The quadratic equation $dy^2+ey+f=0$ with  $d=d_1+d_2\bj\in Z(\bh_s)-\{0\}$,
	$d_1,d_2\in \bc$ is solvable if and only if the quadratic equation $$ax^2+bx+c=0$$ is solvable, where
	$$d_1^{-1}e=k_0+k_1\bi+k_2\bj+k_3\bk, k_i\in \br,i=0,\cdots,3$$ and
	$$a=1+d_1^{-1}d_2\bj,\, b=d_1^{-1}e-k_0(1+d_1^{-1}d_2\bj),\,
	c=d_1^{-1}f-\frac{d_1^{-1}ek_0}{2}+\frac{(1+d_1^{-1}d_2\bj)k_0^2}{4}.$$
	If the quadratic equation $ax^2+bx+c=0$ is solvable and $x$ is a  solution then
	$y= x-\frac{k_0}{2}$ is a solution of $dy^2+ey+f=0$.
\end{pro}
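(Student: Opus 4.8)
The plan is to reduce $dy^2+ey+f=0$ to the target form $ax^2+bx+c=0$ by two elementary, invertible operations: a left multiplication by a nonzero complex scalar, and a real shift of the variable. Because both operations are bijective on $\bh_s$, they preserve solvability, and tracking the variable through them yields the asserted correspondence $y=x-k_0/2$. Thus the whole argument is a reduction-of-parameters computation rather than an existence proof.

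First I would observe that $d_1\neq 0$. Since $d=d_1+d_2\bj\in Z(\bh_s)$, the zero-divisor condition gives $I_d=|d_1|^2-|d_2|^2=0$, so $|d_1|=|d_2|$. If $d_1=0$ then $d_2=0$ as well, forcing $d=0$, contrary to hypothesis. Hence $d_1\in\bc\setminus\{0\}$ is invertible and $d_1^{-1}\in\bc$ exists. Next I would left-multiply the equation by $d_1^{-1}$. Since $d_1^{-1}d=1+d_1^{-1}d_2\bj=a$, this produces the equation $ay^2+(d_1^{-1}e)y+d_1^{-1}f=0$, which is equivalent to the original because left multiplication by $d_1^{-1}$ is undone by left multiplication by $d_1$.

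Writing $d_1^{-1}e=k_0+k_1\bi+k_2\bj+k_3\bk$ with $k_0=\Re(d_1^{-1}e)$, I would then substitute $y=x-k_0/2$. Because $k_0$ is real it is central in $\bh_s$, so $(x-k_0/2)^2=x^2-k_0x+k_0^2/4$ with no commutator corrections, and expanding $a(x-k_0/2)^2+(d_1^{-1}e)(x-k_0/2)+d_1^{-1}f$ collects into $ax^2+bx+c$ with $b=d_1^{-1}e-ak_0$ and $c=ak_0^2/4-(d_1^{-1}e)k_0/2+d_1^{-1}f$, matching the formulas in the statement once $a=1+d_1^{-1}d_2\bj$ is substituted. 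The shift $y=x-k_0/2$ is invertible with inverse $x=y+k_0/2$, so the two equations are solvable simultaneously and the solutions correspond exactly as claimed.

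The only genuine points requiring care are the reversibility of each operation in the non-commutative setting (already handled by the facts that $d_1^{-1}$ is a complex scalar and $k_0$ is real) and the coefficient bookkeeping under the shift. It is worth recording why $k_0/2$ is the right amount to shift: since $d_1^{-1}d_2\bj$ is purely imaginary one has $\Re(a)=1$, whence $\Re(b)=\Re(d_1^{-1}e)-k_0=0$. Eliminating the real part of the linear coefficient is precisely the parameter reduction the proposition is designed to achieve, and I expect this observation, rather than any analytic difficulty, to be the conceptual crux of the write-up.
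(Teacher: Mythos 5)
Your proof is correct and follows essentially the same route as the paper's: left-multiply by $d_1^{-1}$ (after noting $d\in Z(\bh_s)-\{0\}$ forces $d_1\neq 0$) and then apply the real shift $y=x-\frac{k_0}{2}$. Your write-up is in fact slightly more explicit than the paper's about why each step is reversible and why $k_0$ being central avoids commutator corrections, but the substance is identical.
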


\begin{proof}
	Since $d=d_1+d_2\bj\in Z(\bh_s)-\{0\}$, we have $I_{d_1}=I_{d_2}\neq 0$ and $d_1$ is invertible. Hence
	$dy^2+ey+f=0$ is equivalent to $$(1+d_1^{-1}d_2\bj)y^2+d_1^{-1}ey+d_1^{-1}f=0.$$ Let $y=x-\frac{k_0}{2}.$
	Then $dy^2+ey+f=0$ is equivalent to
	$$(1+d_1^{-1}d_2\bj)\big(x^2-k_0x+\frac{k_0^2}{4}\big)+d_1^{-1}e\big(x-\frac{k_0}{2}\big)+d_1^{-1}f=0.$$
	That is
	$$(1+d_1^{-1}d_2\bj)x^2+[d_1^{-1}e-k_0(1+d_1^{-1}d_2\bj)]x+d_1^{-1}f
	-\frac{d_1^{-1}ek_0}{2}+\frac{(1+d_1^{-1}d_2\bj)k_0^2}{4}=0.$$
	\end{proof}
In the process of the above proof, let $$d_1^{-1}d_2=a_2+a_3\bi, a_2,a_3\in \br.$$ Then we have $a_2^2+a_3^2=1$ and
	$$a=1+d_1^{-1}d_2\bj=1+a_2\bj+a_3\bk\in Z(\bh_s).$$
	Since $\Re[d_1^{-1}e-k_0(1+d_1^{-1}d_2\bj)]=0$,  we have $$b=d_1^{-1}e-k_0(1+d_1^{-1}d_2\bj)=b_1\bi+b_2\bj+b_3\bk.$$

Hence  we only need to solve the following equations:
\begin{itemize}
	\item  Equation I: \quad  $ax^2+c=0, a=1+a_2\bj+a_3\bk\in Z(\bh_s)$;
	\item  Equation II: \quad $ax^2+bx+c=0, a=1+a_2\bj+a_3\bk\in Z(\bh_s), b=b_1\bi+b_2\bj+b_3\bk\neq 0$.
\end{itemize}

We will show in Proposition \ref{proc=0} that if Equation I is solvable then the corresponding companion polynomial $$C(x)=0.$$
 By the   Moore-Penrose inverse property obtained in  \cite{cao}, we will solve Equations I  in Section 2 (Theorem \ref{thm2.1}).

For Equation II, observe  that
\begin{equation}x^2=x(2x_0-\bar{x})=2x_0x-I_x.
\end{equation}
Therefore  $ax^2+bx+c=0$  becomes \begin{equation}\label{linearx1} (2x_0a+b)x=aI_x-c.
\end{equation}
Let
\begin{eqnarray}\label{Nf1}N&=&I_x=\bar{x}x,\\
	\label{Tf1}T&=&\bar{x}+x=2x_0.\end{eqnarray}

By $(2x_0a+b)x=aI_x-c$, we have $(2x_0a+b)x\overline{(2x_0a+b)x}=(aI_x-c)\overline{aI_x-c}$.
That is
\begin{equation}\label{enf1} N(2TP_{ab}+I_b+2P_{ac})-I_c=0.\end{equation}

Any solutions $x=x_0+x_1\bi+x_2\bj+x_3\bk$ of $ax^2+bx+c=0$ must fall into two categories:
\begin{itemize}
	\item   $2x_0a+b\in Z(\bh_s)$;
	\item  $2x_0a+b\in \bh_s- Z(\bh_s)$.
\end{itemize}

For Equation  II, we define 
$$SZ=\{x\in \bh_s:ax^2+bx+c=0 \mbox{ and }2x_0a+b\in Z(\bh_s)\}$$ and
$$SI=\{x\in \bh_s:ax^2+bx+c=0 \mbox{ and }2x_0a+b\in \bh_s-Z(\bh_s)\}.$$

In order to solve Equation II,  for technical reasons,  we divide Equation II into the following two equations:
\begin{itemize}
	\item   Equation II for SZ,
	\item   Equation II for SI.
\end{itemize}

If $2x_0a+b\in \bh_s-Z(\bh_s)$, then by (\ref{linearx1}) we have
\begin{equation}x=(2x_0a+b)^{-1}(aI_x-c)=(Ta+b)^{-1}(aN-c)=\frac{(T\bar{a}+\bar{b})(aN-c)}{2TP_{ab}+I_b}\end{equation}
        and \begin{equation}\bar{x}=\frac{(N\bar{a}-\bar{c})(Ta+b)}{2TP_{ab}+I_b}.\end{equation}
Substituting  the above formulas of $x$ and $\bar{x}$ in  (\ref{Tf1}), we obtain
      \begin{eqnarray}
          \label{enf2}
     x+\bar{x}&=&\frac{-2TP_{ac}+2NP_{ab}-2P_{bc}}{2TP_{ab}+I_b}=T.
      \end{eqnarray}

Hence $(T,N)$ satisfies our first  real nonlinear  system:
      \begin{equation}\label{rsym1}\left\{
\begin{aligned}
   N(2TP_{ab}+I_b+2P_{ac})-I_c=0,\\
    2P_{ab}T^2+(2P_{ac}+I_b)T-2NP_{ab}+2P_{bc}=0.
  \end{aligned}
\right.\end{equation}

Since we aim to find a root of $ax^2+bx+c=0$, we do not know $x_0$ beforehand. For technical reason, we may  assume that  $$2x_0a+b=Ta+b\in \bh_s-Z(\bh_s).$$
For Equation II,  after obtaining  the pair $(T,N)$ from the real  nonlinear system (\ref{rsym1}), we need test  $Ta+b\in \bh_s-Z(\bh_s)$. Only for the pair  $(T,N)$ satisfying $Ta+b\in \bh_s-Z(\bh_s)$,
  we obtain the corresponding  solution $ x=(Ta+b)^{-1}(aN-c).$

If  $2x_0a+b\in Z(\bh_s)$ then
  \begin{equation}\label{ncasezero}\left\langle 2x_0a+b,2x_0a+b \right\rangle=4x_0P_{ab}+I_b=0.\end{equation}
Also we have
 \begin{equation}\label{nzacc}\left\langle aI_x-c,aI_x-c \right\rangle=-2I_xP_{ac}+I_c=0.\end{equation}
By Eq.(\ref{ncasezero}) and (\ref{nzacc}), we may know some information on $x_0$ and $I_x$.
 For example, if  $P_{ab}\neq 0$ then  $x_0=\frac{-I_b}{4P_{ab}}$; if $P_{ac}\neq 0$ then  $I_x=\frac{I_c}{2P_{ac}}$.
 However, in general,  we may get no  information  of $x_0$ and $I_x$. For example, if $P_{ab}=0,P_{ac}=0$.  We will resort  to its natural real nonlinear system as follows.

Let $a=1+a_2\bj+a_3\bk,$  $b=b_1\bi+b_2\bj+b_3\bk$,  $c=c_0+c_1\bi+c_2\bj+c_3\bk \in \bh_s$.
By the rule of multiplication Table 1, the  equation $ax^2+bx+c=0$ can be reformulated as our second real nonlinear
system:
\begin{equation}\label{rsym2}\left\{
\begin{aligned}
 & x_0^2-x_1^2+x_2^2+x_3^2+2a_2x_0x_2+2a_3x_0x_3-b_1x_1+b_2x_2+b_3x_3+c_0=0,\\
  &              2x_0x_1-2a_2x_0x_3+2a_3x_0x_2+b_1x_0-b_2x_3+b_3x_2+c_1=0,\\
 &2x_0x_2+a_2(x_0^2-x_1^2+x_2^2+x_3^2)+2a_3x_0x_1-b_1x_3+b_2x_0+b_3x_1+c_2=0,\\
 &2x_0x_3-2a_2x_0x_1+a_3(x_0^2-x_1^2+x_2^2+x_3^2)+b_1x_2-b_2x_1+b_3x_0+c_3=0.
\end{aligned}
\right.\end{equation}

Roughly speaking, we will rely on the two real systems (\ref{rsym1}) or (\ref{rsym2}) to solve the Equation II.  Under some conditions, we can deduce some  some linear relations of $x_i, i=0,\cdots,3$, which is helpfull in treating the Equation II.  For example, under the condition $P_{ab}=0$ and $x\in SZ$, we can deduce  some linear relations of $x_1,x_2$ and $x_3$ from Eqs.(\ref{rsym2}) (Proposition \ref{prop5.1}).

We list our  problem solving process in Table 2.

\begin{table}[!htbp]
	\centering
	\caption{ Problem solving process}
	\begin{tabular}{|c|c|c|c|}
		\hline
		Section &Type of Equation & Result&Examples of Theorem\\
		\hline
		 2&Equation I& Theorem 2.1&Example 2.1\\
\hline
 		\multirow{3}*{3}&Equation II with $P_{ab}\neq 0$ for SZ& Theorem 3.1&Examples 3.1 and 3.2\\
	\cline{2-4}
&Equation II with $P_{ab}=0,P_{\bi a,b}=0$ for SZ& Theorem  3.2&Examples 3.3 and 3.4\\
	\cline{2-4}
&Equation II with $P_{ab}=0,P_{a \bi,b}=0$ for SZ& Theorem  3.3&Examples 3.5 and 3.6\\
\hline
	\multirow{3}*{4}&Equation II with $P_{ab}\neq 0$ for SI& Theorem 4.1&Example 4.1\\
	\cline{2-4}
&Equation II with $P_{ab}=0,I_b+2P_{ac}\neq 0$ for SI& Theorem 4.2&Example  4.2\\
	\cline{2-4}
&Equation II with $P_{ab}=0,I_b+2P_{ac}=0$ for SI& Theorem 4.3&Example 4.3\\
	\cline{2-4}
\hline
	\end{tabular}
\end{table}

We remark that all examples in Table 2 are carefully chosen  to illustrate that all our formulas work.  The author has checked that the equation in Example 3.1 has no solution in $SI$. The equation of Example 3.2 (which is the same as  Example 4.1)   has a solution in $SZ$ and $SI$, respectively. We have given all solutions of all examples in our paper.

In Section 5, we will imitate the approach developed in \cite{Ireneu,Irene,Opfer18,Opfer17} using companion polynomials.  We apply such an approach to our examples in Table 2 with $C(x)\not\equiv 0$ and get the same results.  These examples provide a reciprocal authentication of the two approaches.

\section{Equation I}
 In this section, we consider $ax^2+c=0$ with $a=1+a_2\bj+a_3\bk\in Z(\bh_s)$.
    The  Moore-Penrose inverse \cite{cao} of  $a=t_1+t_2\bj,t_1,t_2\in \bc$ is defined to be
$$a^+=\left\{\begin{array}{ll}
0, & \hbox{if }\,\, $a=0;$ \\
\frac{\overline{t_1}-t_2\bj}{|t_1|^2-|t_2|^2}=\frac{\overline{a}}{I_a}, & \hbox{if\, $I_a\neq 0$;} \\
\frac{\overline{t_1}+t_2\bj}{4|t_1|^2}, & \hbox{if\, $I_a=0$.} \\
\end{array}
\right.$$
For $a=t_1+t_2\bj\in  Z(\bh_s)-\{0\}$, we have the following equations:
 \begin{equation}aa^+a=a,\  a^+aa^+=a^+,\  aa^+=\frac{1}{2}\big(1+\frac{t_2}{\overline{t_1}}\bj\big),
 \  a^+a=\frac{1}{2}\big(1+\frac{t_2}{t_1}\bj\big).\end{equation}

\begin{lem}(c.f.\cite[Corollary 3.1]{cao})\label{lemmoor}
	Let $a=t_1+t_2\bj\in  Z(\bh_s)-\{0\}$. Then the equation $ax=d$ is  solvable
	if and only if $aa^+d=\frac{1}{2}(1+\frac{t_2}{\overline{t_1}}\bj)d=d$,
 in which case all solutions are given by $$x=a^+d+(1-a^+a)y
 =\frac{\overline{t_1}+t_2\bj}{4|t_1|^2}d+\frac{1}{2}(1-\frac{t_2}{t_1}\bj)y,\forall y\in \bh_s.$$
 \end{lem}

\begin{pro}\label{proc=0}
	Let $f(x)=ax^2+c, a=1+a_2\bj+a_3\bk\in Z(\bh_s)$. Then $f(x)=0$ is solvable, then its companion  polynomial must be   $$C(x)=0.$$
\end{pro}
\begin{proof}
By (\ref{czpoly}),  the  companion  polynomial of  Equation I is $$C(x)=2P_{ac}x^2+I_c=0.$$
If $f(x)=0$ is solvable, then there exists $x$ such that $c=-ax^2$. Hence $$I_c=I_aI_x^2=0.$$    Because  $a=1+(a_2+a_3\bi)\bj\in Z(\bh_s)$, we have
\begin{equation}\label{ainvf}a^+=\frac{a}{4},\ \ aa^+=a^+a=\frac{a}{2}.\end{equation}
By Lemma \ref{lemmoor}, $ax^2+c=0$ is solvable if and only if
$aa^+c=c.$
That is  $$ac=2c.$$
Thus $\Re(ac)=2\Re(c)$. Hence $$P_{ac}=c_0-a_2c_2-a_3c_3=2\Re(c)-\Re(ac)=0.$$
This concludes the proof.
\end{proof}

 \begin{thm}\label{thm2.1}
    The quadratic equation  $ax^2+c=0$ is solvable if and only if $$ac=2c.$$
 If $ax^2+c=0$ is solvable then $$x=\sqrt[s]{-\frac{c}{2}+\frac{\bar{a}}{2}y},$$
  where $y\in \bh_s$ satisfying that $$\sqrt[s]{-\frac{c}{2}+\frac{\bar{a}}{2}y}\neq \emptyset.$$
   \end{thm}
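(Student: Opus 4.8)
The plan is to \emph{linearize}. Setting $w := x^2 \in \bh_s$, the equation $ax^2 + c = 0$ becomes the linear equation $aw = -c$ in the single unknown $w$. Since $a = 1 + a_2\bj + a_3\bk$ satisfies $I_a = 1 - a_2^2 - a_3^2 = 0$ with $a \neq 0$, this is exactly the situation covered by Lemma~\ref{lemmoor}, the Moore--Penrose solvability criterion for $aw = d$ over a nonzero zero divisor $a$. Accordingly I would (i) convert solvability of the quadratic into solvability of $aw = -c$ and extract the scalar condition; (ii) read off the general $w$ from Lemma~\ref{lemmoor}; and (iii) recover $x$ as a split-quaternion square root of $w$.

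For the computations, write $a = t_1 + t_2\bj$ with $t_1 = 1$ and $t_2 = a_2 + a_3\bi$, so that $|t_1|^2 = |t_2|^2 = 1$. The formulas recalled at the start of Section~2 then give $a^+ = \frac{\overline{t_1} + t_2\bj}{4|t_1|^2} = \frac{a}{4}$ and $aa^+ = a^+a = \frac{1}{2}(1 + t_2\bj) = \frac{a}{2}$, hence $1 - a^+a = \frac{\bar a}{2}$. Applying Lemma~\ref{lemmoor} with $d = -c$, the equation $aw = -c$ is solvable if and only if $aa^+(-c) = -c$, i.e. $\frac{a}{2}c = c$, i.e. $ac = 2c$. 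This already yields necessity: if the quadratic has a root $x$, then $w = x^2$ solves $aw = -c$, forcing $ac = 2c$.

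For the solution form, Lemma~\ref{lemmoor} describes all solutions of $aw = -c$ as $w = a^+(-c) + (1 - a^+a)y = -\frac{ac}{4} + \frac{\bar a}{2}y$ for $y \in \bh_s$. Invoking the solvability condition $ac = 2c$ to rewrite $-\frac{ac}{4} = -\frac{c}{2}$, this collapses to $w = -\frac{c}{2} + \frac{\bar a}{2}y$. An element $x \in \bh_s$ then solves the original equation precisely when $x^2 = w$ for one of these $w$, i.e. $x \in \sqrt[s]{-\frac{c}{2} + \frac{\bar a}{2}y}$ for a value of $y$ for which this square-root set is nonempty; this is the asserted formula.

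The main obstacle is the square-root step, namely upgrading $ac = 2c$ from a necessary to a sufficient condition. Since $\bar a$ is a zero divisor, as $y$ ranges over $\bh_s$ the term $\frac{\bar a}{2}y$ fills out only a proper real subspace, so the candidate $w$ is confined to the affine family $-\frac{c}{2} + \frac{\bar a}{2}\bh_s$; one must show that this family always meets the set of perfect squares when $ac = 2c$. I would handle this by passing to the real description of squaring, using $x^2 = 2x_0 x - I_x$ so that $\Im(x^2) = 2x_0\Im(x)$ and $\Re(x^2) = x_0^2 - x_1^2 + x_2^2 + x_3^2$, equivalently to the $2\times 2$ real matrix model, and exploiting the two free real degrees of freedom in $y$ to place $\Re(w)$ and $I_w$ in the range where $\sqrt[s]{w}\neq\emptyset$. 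The identity $a - 2 = -\bar a$, which recasts $ac = 2c$ as $\bar a c = 0$, is what makes $-\frac{c}{2}$ compatible with the subspace $\frac{\bar a}{2}\bh_s$ and drives this existence argument; the preceding algebraic reduction via Lemma~\ref{lemmoor} is routine once the linearization $w = x^2$ is set up.
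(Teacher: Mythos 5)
Your proposal follows essentially the same route as the paper's own proof: linearize via $w=x^2$, apply Lemma \ref{lemmoor} with $a^+=\frac{a}{4}$, $aa^+=a^+a=\frac{a}{2}$, $1-a^+a=\frac{\bar a}{2}$ to get solvability of $aw=-c$ iff $ac=2c$, and recover $x$ as the split square root $\sqrt[s]{-\frac{c}{2}+\frac{\bar a}{2}y}$. The ``main obstacle'' you flag --- showing that the affine family $-\frac{c}{2}+\frac{\bar a}{2}\bh_s$ really contains a perfect square, so that $ac=2c$ is sufficient and not merely necessary --- is in fact not addressed by the paper either, which simply invokes the square-root characterization of \cite{caoaxiom} and builds the nonemptiness requirement into the statement, so your explicit sketch of that step goes beyond, rather than falls short of, the paper's argument.
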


\begin{proof}
By the proof of Proposition \ref{proc=0}, $ax^2+c=0$ is solvable if and only if $ac=2c.$
Under this condition,
$$x^2=a^+(-c)+(1-a^+a)y
=-\frac{c}{2}+(1-\frac{a}{2})y=-\frac{c}{2}+\frac{\bar{a}}{2}y,\forall y\in \bh_s.$$
It follows from   \cite[Theorem 2.1]{caoaxiom} that
$$x=\sqrt[s]{-\frac{c}{2}+\frac{\bar{a}}{2}y},$$
where $y\in \bh_s$ such that $\sqrt[s]{-\frac{c}{2}+\frac{\bar{a}}{2}y}\neq \emptyset.$
\end{proof}

\begin{exam}\label{exam4.1}
Consider the quadratic equation $(1+\bj)x^2 -1-\bj=0$.
That is, $a=1+\bj,c=-1-\bj.$
 $$x=\sqrt[s]{\frac{1}{2}(1+\bj)+\frac{1}{2}(1-\bj)y},$$
 where $y\in \bh_s$ such that $\sqrt[s]{\frac{1}{2}(1+\bj)+\frac{1}{2}(1-\bj)y}\neq \emptyset.$
%
%
%
\end{exam}

\section{Equation II for SZ}

We will find the necessary and sufficient conditions of  Equation II to have a solution such that  $2x_0+b\in Z(\bh_s)$.

For the sake of simplification, we define the following three numbers related to  Equation II:
\begin{equation}\label{t1t2d}\delta=a_2b_3-a_3b_2+b_1,\,\,t_1=c_2-c_0a_2-a_3c_1,\,\,t_2=c_3-c_0a_3+a_2c_1.\end{equation}
The above numbers  in fact are
\begin{equation}\label{t1t2d1}\delta=P_{ a \bi,b},\quad t_1=-P_{ a \bj,c},\quad t_2=-P_{\bk a,c}.\end{equation}

We relable the real nonlinear system (\ref{rsym2}) as follows.
\begin{eqnarray}
	x_0^2-x_1^2+x_2^2+x_3^2+2a_2x_0x_2+2a_3x_0x_3-b_1x_1+b_2x_2+b_3x_3+c_0=0,\label{ae1}\\
	2x_0x_1-2a_2x_0x_3+2a_3x_0x_2+b_1x_0-b_2x_3+b_3x_2+c_1=0,\label{ae2}\\
	2x_0x_2+a_2(x_0^2-x_1^2+x_2^2+x_3^2)+2a_3x_0x_1-b_1x_3+b_2x_0+b_3x_1+c_2=0,\label{ae3}\\
	2x_0x_3-2a_2x_0x_1+a_3(x_0^2-x_1^2+x_2^2+x_3^2)+b_1x_2-b_2x_1+b_3x_0+c_3=0.\label{ae4}
\end{eqnarray}

  Suppose $x=x_0+x_1\bi+x_2\bj+x_3\bk\in SZ$ is a solution of  Eq.(\ref{linearx1}).
By Eq.(\ref{linearx1}), we have
\begin{equation}\label{inpax0}\left\langle 2x_0a+b,2x_0a+b \right\rangle=4x_0P_{ab}+I_b=0.\end{equation}
Based on this, we  divide our consideration into  two cases: $$P_{ab}\neq 0\mbox{ and }P_{ab}=0.$$

  \subsection{Case $P_{ab}\neq 0$}
If $P_{ab}\neq 0$ then by (\ref{inpax0}) we have \begin{equation}\label{slx0}x_0=\frac{-I_b}{4P_{ab}}.\end{equation}
 We reformulate  Eqs.(\ref{ae1}) and (\ref{ae2}) as
\begin{eqnarray}
 -x_1^2+x_2^2+x_3^2-b_1x_1+(b_2+2a_2x_0)x_2+(b_3+2a_3x_0)x_3+c_0+x_0^2&=&0,\label{1aeq1}\\
               2x_0x_1+(b_3+2a_3x_0)x_2-(b_2+2a_2x_0)x_3&=&-c_1-b_1x_0.\label{1aeq2}
 \end{eqnarray}
   Using  Eq.(\ref{ae1})$\times a_2$+Eq.(\ref{ae2})$\times a_3- $ Eq.(\ref{ae3})
    and Eq.(\ref{ae1})$\times a_3-$ Eq.(\ref{ae2})$\times a_2-$ Eq.(\ref{ae4}), we obtain
\begin{eqnarray}
  (-a_2b_1-b_3)x_1+(a_2b_2+a_3b_3)x_2+(a_2b_3-a_3b_2+b_1)x_3&=&c_2-c_0a_2-a_3c_1+(b_2-a_3b_1)x_0,\label{1aeq3}\\
 (-a_3b_1+b_2)x_1+(a_3b_2-a_2b_3-b_1)x_2+(a_2b_2+a_3b_3)x_3&=& c_3-c_0a_3+a_2c_1+(b_3+a_2b_1)x_0.\label{1aeq4}
\end{eqnarray}
Let $y=(x_1,x_2,x_3)^T$.  Eqs.(\ref{1aeq2})-(\ref{1aeq4}) can be expressed as
  \begin{equation}\label{lineareqs5} Ay=u,\end{equation}
 where
    \begin{equation}\label{matrixA}A=\left(
  \begin{array}{ccc}
  2x_0&b_3+2a_3x_0&-b_2-2a_2x_0\\
       -a_2b_1-b_3 & a_2b_2+a_3b_3 &a_2b_3-a_3b_2+b_1\\
    -a_3b_1+b_2 & a_3b_2-a_2b_3-b_1 & a_2b_2+a_3b_3 \\
    \end{array}
\right)\end{equation} and  \begin{equation}\label{matrixu}u=\left(
                 \begin{array}{c}
                   -c_1-b_1x_0\\
                  t_1+(b_2-a_3b_1)x_0 \\
                  t_2+(b_3+a_2b_1)x_0\\
                   \end{array}
               \right).\end{equation}

\begin{pro}\label{prop5.2} Let $x_0=\frac{-I_b}{4P_{ab}}$ and $a_2^2+a_3^2=1$. Let  $A$ be given by (\ref{matrixA}). Then
 $$\det(A)=0.$$
\end{pro}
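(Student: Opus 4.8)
The plan is to compute $\det(A)$ directly, exploiting the constraint $a_2^2+a_3^2=1$ together with the specific value $x_0 = -I_b/(4P_{ab})$ to produce the required cancellation. First I would observe that the lower-left $2\times 2$ block of $A$ has a special structure: the $(2,2)$ and $(3,3)$ entries are both equal to $a_2b_2+a_3b_3$, while the $(2,3)$ and $(3,2)$ entries are $\delta = a_2b_3-a_3b_2+b_1$ and $-\delta = a_3b_2-a_2b_3-b_1$ respectively. So expanding $\det(A)$ along the first row, I would express each of the three $2\times2$ cofactors in terms of the quantities $a_2b_2+a_3b_3$, $\delta$, and the entries $-a_2b_1-b_3$ and $-a_3b_1+b_2$.

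Next I would carry out the cofactor expansion. Writing $\alpha = a_2b_2+a_3b_3$ and using $\delta$ as above, the determinant becomes a polynomial in $x_0$ with coefficients built from $\alpha$, $\delta$, $b_1,b_2,b_3$ and $a_2,a_3$. At this stage I expect the algebra to simplify dramatically once I substitute $a_2^2+a_3^2=1$ wherever $a_2^2$ or $a_3^2$ appears, and recall that $P_{ab} = \langle a,b\rangle = a_0b_0+a_1b_1-a_2b_2-a_3b_3$; here $a = 1+a_2\bj+a_3\bk$ and $b = b_1\bi+b_2\bj+b_3\bk$, so $a_0=1, a_1=0, b_0=0$, giving $P_{ab} = -(a_2b_2+a_3b_3) = -\alpha$. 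Likewise $I_b = \langle b,b\rangle = b_1^2 - b_2^2 - b_3^2$. Thus the value $x_0 = -I_b/(4P_{ab}) = I_b/(4\alpha)$ is exactly what I must plug in, and the identity $\det(A)=0$ should emerge as an algebraic consequence of this substitution.

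The cleanest route, which I would follow, is to show that $\det(A)$ factors so that the linear-in-$x_0$ factor vanishes precisely at $x_0 = I_b/(4\alpha)$. Concretely, I anticipate that after collecting terms the determinant takes the form $\det(A) = (\text{something}) \cdot (4\alpha x_0 - I_b)$ up to sign, where the first factor is a polynomial in the $a_i,b_i$ that need not vanish. Since $4\alpha x_0 - I_b = -4P_{ab}x_0 - I_b = -(4P_{ab}x_0 + I_b) = 0$ by Eq.~(\ref{inpax0}) (equivalently by the defining relation \eqref{slx0}), the determinant vanishes. I would verify the factorization by treating $\det(A)$ as a cubic in $x_0$, checking that in fact the $x_0^2$ and $x_0^3$ coefficients cancel once $a_2^2+a_3^2=1$ is used, leaving an affine function of $x_0$ whose root is $I_b/(4\alpha)$.

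The main obstacle will be the bookkeeping in the second step: the raw cofactor expansion produces a degree-three polynomial in $x_0$ with many cross terms in $a_2,a_3,b_1,b_2,b_3$, and the cancellations down to an affine function rely delicately on repeatedly applying $a_2^2+a_3^2=1$. I would organize this by grouping terms according to their total degree in the $b_i$ and tracking the coefficient $\alpha = a_2b_2+a_3b_3$ as an atomic symbol rather than expanding it, so that the relation to $P_{ab}$ stays visible throughout. Once the determinant is seen to be affine in $x_0$ with root $I_b/(4\alpha)$, substituting $x_0 = -I_b/(4P_{ab})$ gives $\det(A)=0$ immediately.
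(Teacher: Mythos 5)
Your proposal is correct and is in essence the same argument as the paper's: both are direct computations of $\det(A)$ that exploit $a_2^2+a_3^2=1$ and then the specific value of $x_0$. The difference is purely organizational. The paper performs the column operations $\mathrm{col}_2\mapsto \mathrm{col}_2-a_3\,\mathrm{col}_1$ and $\mathrm{col}_3\mapsto \mathrm{col}_3+a_2\,\mathrm{col}_1$ (simplifying entries via $a_2^2+a_3^2=1$) to reach the matrix $B$ of (\ref{matrixB}), and then asserts $\det(B)=0$ by an unrecorded verification; you instead expand along the first row and factor, and your anticipated factorization is exactly what comes out: writing $\alpha=a_2b_2+a_3b_3=-P_{ab}$, $u=a_2b_1+b_3$, $v=b_2-a_3b_1$, one finds $a_3u+a_2v=\alpha$ and $a_3v-a_2u=-\delta$ (the only place $a_2^2+a_3^2=1$ is used), together with $ub_3+vb_2=b_1\delta-I_b$ and $vb_3-ub_2=-b_1\alpha$, whence $\det(A)=4\alpha^2x_0-\alpha I_b=P_{ab}\bigl(4P_{ab}x_0+I_b\bigr)$, which vanishes at $x_0=-I_b/(4P_{ab})$. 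This is in fact slightly more informative than the paper's proof, since it exhibits $\det(A)$ as a multiple of the left-hand side of (\ref{inpax0}) and so shows exactly where the hypothesis on $x_0$ enters. One slip in your plan should be corrected: $\det(A)$ is \emph{not} a cubic in $x_0$ requiring miraculous cancellation of $x_0^3$ and $x_0^2$ terms. Since $x_0$ appears only in the first row and a determinant is linear in each row separately, $\det(A)$ is automatically affine in $x_0$, and $a_2^2+a_3^2=1$ is invoked only once, in evaluating the linear coefficient. The misconception is harmless (it only overestimates the bookkeeping), but the justification ``affine by multilinearity'' should replace the claimed delicate cancellations.
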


\begin{proof}
Let \begin{equation}\label{matrixB}B=\left(
  \begin{array}{ccc}
  2x_0&b_3&-b_2\\
       -a_2b_1-b_3 & a_2b_2+2a_3b_3+a_2a_3b_1 &-a_3b_2+a_3^2b_1 \\
    -a_3b_1+b_2 & -a_2b_3-a_2^2b_1& 2a_2b_2+a_3b_3-a_2a_3b_1 \\
    \end{array}
\right).\end{equation}
It is obvious that $B$ is obtained by performing  elementary  column transformations from  $A$.
 It can be verified that $\det(B)=0$. Therefore $\det(A)=\det(B)=0$.
\end{proof}
 Let \begin{equation}\label{matrixM}M=\left(\begin{array}{cc}
        a_2b_2+a_3b_3 &a_2b_3-a_3b_2+b_1 \\
    a_3b_2-a_2b_3-b_1& a_2b_2+a_3b_3\\
  \end{array}\right)=\left(\begin{array}{cc}
        -P_{ab} &\delta \\
   -\delta& -P_{ab}\\
  \end{array}\right).\end{equation} Since $P_{ab}\neq 0$, the subdeterminant
  $$m=:\det(M)=P_{ab}^2+\delta^2>0.$$
    By Proposition \ref{prop5.2}, this means that  $rank(A)=2$. We reformulate  Eqs.(\ref{1aeq3}) and (\ref{1aeq4}) as
   \begin{equation}Mz=v,\end{equation}
  where $$z=(x_2,x_3)^T,v=\left(
                 \begin{array}{c}
                   t_1+(b_2-a_3b_1)x_0+(a_2b_1+b_3)x_1 \\
                  t_2+(a_2b_1+b_3)x_0+(a_3b_1-b_2)x_1\\
                   \end{array}
                 \right).$$

  Let
$$k_1:=-P_{ab}(a_2b_1+b_3)-\delta(a_3b_1-b_2)=2b_2\delta-a_3I_b,$$
$$k_2:=-P_{ab}(b_2-a_3b_1)-\delta(a_2b_1+b_3)=-2b_3\delta-a_2I_b$$
and
\begin{equation*}\Delta_1=\frac{-P_{ab}t_1-\delta t_2}{m},\Delta_2=\frac{\delta t_1-P_{ab}t_2}{m}.\end{equation*}
Note that
$$ m=P_{ab}^2+\delta^2=b_1^2+b_2^2+b_3^2+2a_2b_1b_3-2a_3b_1b_2=2b_1\delta-I_b$$
and $$k_1^2+k_2^2=m^2.$$
   Because
  $$M^{-1}=\frac{1}{m}\left(\begin{array}{cc}
       -P_{ab} & -\delta\\
   \delta&-P_{ab}\\
  \end{array}\right)\mbox{  and } z=M^{-1}v,$$
    we have
\begin{eqnarray}x_2&=&\frac{-P_{ab}[t_1+(b_2-a_3b_1)x_0+(a_2b_1+b_3)x_1]
-\delta[t_2+(a_2b_1+b_3)x_0+(a_3b_1-b_2)x_1]}{m}\nonumber\\
&=&\frac{-P_{ab}(a_2b_1+b_3)-\delta(a_3b_1-b_2)}{m}x_1+\frac{-P_{ab}(b_2-a_3b_1)-\delta(a_2b_1+b_3)}{m}x_0
+\frac{-P_{ab}t_1-\delta t_2}{m}\nonumber\\
&=&\frac{k_1}{m}x_1+\frac{k_2}{m}x_0+\Delta_1\label{x2s5}\end{eqnarray}
and
\begin{eqnarray}x_3&=&\frac{\delta[t_1+(b_2-a_3b_1)x_0+(a_2b_1+b_3)x_1]-P_{ab}[t_2+(a_2b_1+b_3)x_0+(a_3b_1-b_2)x_1]}{m}\nonumber\\
&=&\frac{\delta(a_2b_1+b_3)-P_{ab}(a_3b_1-b_2)}{m}x_1
+\frac{-P_{ab}(a_2b_1+b_3)-\delta(a_3b_1-b_2)}{m}x_0+\frac{\delta t_1-P_{ab}t_2}{m}\nonumber\\
&=&-\frac{k_2}{m}x_1+\frac{k_1}{m}x_0+\Delta_2.\label{x3s5}\end{eqnarray}
Substituting the above two formulas  in Eq.(\ref{1aeq2}), we  have
\begin{eqnarray*}\Big(2x_0+\frac{b_3k_1+b_2k_2+2a_3k_1x_0+2a_2k_2x_0}{m}\Big)x_1+F=0,
\end{eqnarray*}
where \begin{eqnarray}\label{condthm5.1}F=\frac{2a_3k_2-2a_2k_1}{m}x_0^2
+\Big(\frac{b_3k_2-b_2k_1}{m}+2a_3\Delta_1-2a_2\Delta_2+b_1\Big)x_0+b_3\Delta_1-b_2\Delta_2+c_1.\end{eqnarray}
Note that $$2x_0+\frac{b_3k_1+b_2k_2+2a_3k_1x_0+2a_2k_2x_0}{m}=0.$$ By the solvability of $Ay=u$, we should have $F=0$.
We remark that the fact that the coefficient of $x_1$ is zero
 is guaranteed by $\det(A)=0$ and $F=0$ is just a restatement of $rank(A)=rank(A,u)=2$.

Substituting  $x_2$ and $x_3$ of  (\ref{x2s5}) and (\ref{x3s5}) in Eq.(\ref{1aeq1}), we obtain
$$Rx_1+L=0,$$
where \begin{eqnarray}R=\frac{2k_1\Delta_1-2k_2\Delta_2+b_2k_1-b_3k_2+2(a_2k_1-a_3k_2)x_0-mb_1}{m}\end{eqnarray} and
\begin{eqnarray}L&=&b_2\Delta_1+b_3\Delta_2+\Delta_1^2+\Delta_2^2+c_0+\frac{2(a_2k_2+a_3k_1+m)}{m}x_0^2\nonumber\\
&&+\frac{(2k_2\Delta_1+2k_1\Delta_2+b_2k_2+b_3k_1+2a_2\Delta_1m+2a_3\Delta_2m)}{m}x_0.\end{eqnarray}
 If $R=0$  we should have $L=0$ and in this case, $x_1$ is arbitrary.
 If $R\neq 0$ then $$x_1=\frac{-L}{R}.$$

Summarizing our reasoning process, we figure out the following  conditions.

 \begin{defi}\label{def5.1}
For the coefficients $a,b,c$ in Equation II  such that $P_{ab}\neq 0,$
we set \begin{equation}\label{x0s5thm}x_0=\frac{-I_b}{4P_{ab}},\end{equation}

\begin{equation}\label{k1k2mthm}k_1=2b_2\delta-a_3I_b,k_2=-2b_3\delta-a_2I_b, m=2b_1\delta-I_b,\end{equation}
\begin{equation}\label{Delta}\Delta_1=\frac{-P_{ab}t_1-\delta t_2}{m},\Delta_2=\frac{\delta t_1-P_{ab}t_2}{m},\end{equation}
 \begin{eqnarray}\label{thmR}R=\frac{2k_1\Delta_1-2k_2\Delta_2+b_2k_1-b_3k_2+2(a_2k_1-a_3k_2)x_0-mb_1}{m},\end{eqnarray}
   \begin{eqnarray}L&=&b_2\Delta_1+b_3\Delta_2+\Delta_1^2+\Delta_2^2+c_0+\frac{2(a_2k_2+a_3k_1+m)}{m}x_0^2\nonumber\\
&&+\frac{(2k_2\Delta_1+2k_1\Delta_2+b_2k_2+b_3k_1+2a_2\Delta_1m+2a_3\Delta_2m)}{m}x_0,\label{thmL}\end{eqnarray}
and
\begin{eqnarray}\label{condthm3.2F}F=\frac{2a_3k_2-2a_2k_1}{m}x_0^2
+\Big(\frac{b_3k_2-b_2k_1}{m}+2a_3\Delta_1-2a_2\Delta_2+b_1\Big)x_0+b_3\Delta_1-b_2\Delta_2+c_1.\end{eqnarray}
We say  $(a,b,c)$   satisfies  {\bf Condition 1} if the following two conditions hold:
\begin{itemize}
		\item [(1)]  $F=0;$
\item [(2)] If $R=0$ then $L=0$.
\end{itemize}
 \end{defi}

Summarizing the previous results, we obtain the following theorem.
\begin{thm}\label{thm5.1}
 Equation II with $P_{ab}\neq 0$  has a  solution $x\in SZ$ if and only if Condition 1 holds.
 Let $x_0,k_1,k_2,m,\Delta_1,\Delta_2,R,L,F$ be given by Definition \ref{def5.1}.
If Condition 1 holds, then we have the following cases:
\begin{itemize}
	\item [(1)]
 If $R\neq 0$  then Equation II has a solution:
 $$x=x_0-\frac{L}{R}\bi+x_2\bj+x_3\bk,$$
where $$x_2= -\frac{k_1L}{mR}+\frac{k_2}{m}x_0+\Delta_1$$
and
 $$x_3=\frac{k_2L}{mR}+\frac{k_1}{m}x_0+\Delta_2.$$

\item [(2)]  If $R=0$ then Equation II has solutions:
$$x=x_0+x_1\bi+(\frac{k_1}{m}x_1+\frac{k_2}{m}x_0+\Delta_1)\bj+(-\frac{k_2}{m}x_1+\frac{k_1}{m}x_0+\Delta_2)\bk,
 \forall x_1\in \br.$$
\end{itemize}
\end{thm}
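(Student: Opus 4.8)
The plan is to prove both directions by following the reduction already set up above, checking at each step that it is reversible. First, membership $x\in SZ$ means $2x_0a+b\in Z(\bh_s)$, which by (\ref{inpax0}) is the single scalar equation $4x_0P_{ab}+I_b=0$; since $P_{ab}\neq0$ this forces $x_0=-I_b/(4P_{ab})$ as in (\ref{slx0}). So $x_0$ is no longer an unknown, and the problem reduces to solving the real system (\ref{ae1})--(\ref{ae4}) in the three unknowns $x_1,x_2,x_3$. I would first record that this system is equivalent to the pair (\ref{1aeq1})--(\ref{1aeq2}) together with the two combinations (\ref{1aeq3})--(\ref{1aeq4}), since the latter are obtained from (\ref{ae1})--(\ref{ae4}) by the invertible operations $a_2\cdot(\ref{ae1})+a_3\cdot(\ref{ae2})-(\ref{ae3})$ and $a_3\cdot(\ref{ae1})-a_2\cdot(\ref{ae2})-(\ref{ae4})$; the purpose of these combinations is that they annihilate the common quadratic block $x_0^2-x_1^2+x_2^2+x_3^2$, leaving the three linear equations $Ay=u$.

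Next comes the linear analysis. By Proposition \ref{prop5.2} we have $\det A=0$, whereas the lower-right block $M$ of (\ref{matrixM}) has $\det M=m=P_{ab}^2+\delta^2>0$; hence $\mathrm{rank}(A)=2$, so that rows two and three of $Ay=u$ are independent and the first row is a fixed linear combination of them. Solving $Mz=v$ expresses $x_2,x_3$ as the affine functions (\ref{x2s5}),(\ref{x3s5}) of $x_1$, and substituting these back into the redundant row (\ref{1aeq2}) makes the left-hand side equal to that same combination of the right-hand sides, hence independent of $x_1$; thus the coefficient of $x_1$ vanishes automatically (this is exactly the content of $\det A=0$ with rank $2$), and consistency is precisely the scalar condition $F=0$ with $F$ as in (\ref{condthm5.1}).

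I would then feed $x_2,x_3$ into the remaining quadratic equation (\ref{1aeq1}). The decisive point is that the $x_1^2$ coefficient coming from $x_2^2+x_3^2$ equals $(k_1^2+k_2^2)/m^2=1$ by the identity $k_1^2+k_2^2=m^2$, and it cancels the $-x_1^2$ already present; thus (\ref{1aeq1}) collapses to the \emph{linear} equation $Rx_1+L=0$ with $R,L$ as in (\ref{thmR}),(\ref{thmL}). The two directions now close together. Necessity: a genuine solution $x\in SZ$ makes $F=0$ hold, and if $R=0$ it forces $L=0$, which is exactly Condition 1. Sufficiency: given Condition 1, set $x_0$ by (\ref{x0s5thm}); if $R\neq0$ take $x_1=-L/R$ and read $x_2,x_3$ off (\ref{x2s5}),(\ref{x3s5}), while if $R=0$ (so $L=0$) take $x_1$ arbitrary. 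In either case $F=0$ guarantees the linear rows (\ref{1aeq2})--(\ref{1aeq4}), the relation $Rx_1+L=0$ guarantees the quadratic row (\ref{1aeq1}), and $4x_0P_{ab}+I_b=0$ guarantees $x\in SZ$; reversing the equivalences of the first paragraph shows $ax^2+bx+c=0$, which yields the displayed formulas.

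The main obstacle is purely computational: verifying the two algebraic identities on which the entire collapse rests, namely $\det A=0$ (already isolated as Proposition \ref{prop5.2}) and $k_1^2+k_2^2=m^2$, together with the simplified closed forms $k_1=2b_2\delta-a_3I_b$, $k_2=-2b_3\delta-a_2I_b$, and $m=2b_1\delta-I_b$ used implicitly throughout. Once these are in hand the remainder is routine rank/consistency bookkeeping; the only conceptual subtlety is recognizing that $k_1^2+k_2^2=m^2$ is precisely what turns an a priori nonlinear system in three unknowns into a single linear equation for $x_1$.
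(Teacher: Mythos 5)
Your proposal is correct and follows essentially the same route as the paper: fixing $x_0=-I_b/(4P_{ab})$ from the zero-divisor condition, forming the linear system $Ay=u$ via the combinations $a_2\cdot(\ref{ae1})+a_3\cdot(\ref{ae2})-(\ref{ae3})$ and $a_3\cdot(\ref{ae1})-a_2\cdot(\ref{ae2})-(\ref{ae4})$, using $\det A=0$ together with $\det M=m>0$ to solve for $x_2,x_3$ in terms of $x_1$ with consistency condition $F=0$, and then collapsing (\ref{1aeq1}) to the linear equation $Rx_1+L=0$. Your explicit observation that $k_1^2+k_2^2=m^2$ is what cancels the $x_1^2$ terms is a nice clarification of a step the paper leaves implicit, but it is not a different method.
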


\begin{exam}\label{exam5.1}
	Consider the quadratic equation $(1+\bj)x^2 +(\bi+2\bj+\bk)x-\frac{1}{4}+\frac{5}{2}\bi+\frac{3}{4}\bi+\frac{5}{2}\bk=
0$.
 That is, $a=1+\bj,b=\bi+2\bj+\bk$ and $c=-\frac{1}{4}+\frac{5}{2}\bi+\frac{3}{4}\bi+\frac{5}{2}\bk$. In this case
$$x_0=-\frac{1}{2},k_1=8,k_2=0,\Delta_1=-1,\Delta_2=\frac{3}{2},m=8,R=-2,L=2, F=0.$$
Therefore $(a,b,c)$ satisfies  {\it Condition 1} and $x_1=-\frac{L}{R}=1, x_2=0,x_3=1$.
 Thus $$x=-\frac{1}{2}+\bi+\bk$$ is a solution of the given quadratic equation.
\end{exam}

\begin{exam}\label{exam5.2}
	Consider the quadratic equation $(1+\bj)x^2 +(\bi+\bj)x-1+\bi= 0$.
That is, $a=1+\bj,b=\bi+\bj$ and $c=-1+\bi$. In this case $$x_0=0,k_1=2,k_2=0,\Delta_1=0,\Delta_2=1,m=2,R=L=0,F=0.$$
Therefore $(a,b,c)$ satisfies  {\it Condition 1}. In this case $x_1$ is arbitrary, $x_2=x_1,x_3=1$.
Thus $$x=x_1\bi+x_1\bj+\bk,\forall x_1\in \br$$  are solutions of   the given quadratic equation.
\end{exam}

\subsection{Case $P_{ab}=0$}
In this subsection, we will find the necessary and sufficient conditions of  Equation II with $P_{ab}=0$ to have  a solution $x\in SZ$.

We begin with a proposition, which describes the linear relation of $x_i,i=0,\cdots, 3$.
\begin{pro}\label{prop5.1}
	Suppose that $P_{ab}=0$. Then the solution $x$ of $ax^2+bx+c=0$ satisfies the following linear equation:
	\begin{equation}\label{lineareqn} Ay=u,\end{equation}
	where  $y=(x_0,x_1,x_2,x_3)^T$,  \begin{equation}\label{matrixA1n}A=\left(
		\begin{array}{cccc}
			b_2-a_3b_1&a_2b_1+b_3&0&-\delta\\
			a_2b_1+b_3&a_3b_1-b_2&\delta&0
		\end{array}
		\right),u=\left(
		\begin{array}{c}
			-t_1\\
			-t_2
		\end{array}
		\right)\end{equation}
and $t_1,t_2,\delta$ are given by (\ref{t1t2d1}).	
	
\end{pro}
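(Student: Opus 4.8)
The plan is to reuse the two linear combinations of the real system that were already formed in the $P_{ab}\neq0$ analysis, observe that they are \emph{purely linear} in $x_0,\dots,x_3$ for \emph{any} solution, and then specialize by imposing $P_{ab}=0$.

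First I would recall that $a=1+a_2\bj+a_3\bk\in Z(\bh_s)$ forces $a_2^2+a_3^2=1$, and that a direct evaluation of the inner product (\ref{inp2}) gives $P_{ab}=\left\langle a,b\right\rangle=-(a_2b_2+a_3b_3)$; thus the hypothesis $P_{ab}=0$ is exactly the scalar identity $a_2b_2+a_3b_3=0$. I would also keep in mind that the constant numbers $t_1=c_2-c_0a_2-a_3c_1$, $t_2=c_3-c_0a_3+a_2c_1$ and $\delta=a_2b_3-a_3b_2+b_1$ are those defined in (\ref{t1t2d}).

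Next I would form the combinations $a_2\cdot$(\ref{ae1})$+a_3\cdot$(\ref{ae2})$-$(\ref{ae3}) and $a_3\cdot$(\ref{ae1})$-a_2\cdot$(\ref{ae2})$-$(\ref{ae4}), which are precisely the combinations producing (\ref{1aeq3}) and (\ref{1aeq4}). The one computation that must be checked is that in each combination every quadratic monomial cancels: the block $x_0^2-x_1^2+x_2^2+x_3^2$ drops because the $a_2$- (resp. $a_3$-) multiple of (\ref{ae1}) is cancelled by the matching term in (\ref{ae3}) (resp. (\ref{ae4})), while the cross terms $x_0x_1,x_0x_2,x_0x_3$ vanish using $a_2^2+a_3^2=1$ at the step $2(a_2^2+a_3^2)x_0x_2-2x_0x_2=0$ and its analogues. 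After this cancellation the two combinations are exactly (\ref{1aeq3}) and (\ref{1aeq4}), and since the derivation used only algebraic combinations of the four equations (never the specific value $x_0=-I_b/4P_{ab}$), these two relations hold for every solution $x$ regardless of the value of $P_{ab}$.

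Finally I would impose $P_{ab}=0$, i.e. $a_2b_2+a_3b_3=0$. By the identification in (\ref{matrixM}), this quantity is exactly the coefficient of $x_2$ in (\ref{1aeq3}) and of $x_3$ in (\ref{1aeq4}), so both terms drop out. Moving the $x_0$-terms to the left and multiplying each equation by $-1$ turns (\ref{1aeq3}) into $(b_2-a_3b_1)x_0+(a_2b_1+b_3)x_1-\delta x_3=-t_1$ and (\ref{1aeq4}) into $(a_2b_1+b_3)x_0+(a_3b_1-b_2)x_1+\delta x_2=-t_2$, which are exactly the two rows of $Ay=u$ with $A$ and $u$ as stated. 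The main obstacle is purely clerical, namely the bookkeeping of the quadratic-term cancellation; the only genuinely structural fact used is $a_2^2+a_3^2=1$ coming from $a\in Z(\bh_s)$.
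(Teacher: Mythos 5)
Your proposal is correct and matches the paper's own proof in essence: the paper forms the combinations Eq.(\ref{ae3})$-a_2\cdot$Eq.(\ref{ae1})$-a_3\cdot$Eq.(\ref{ae2}) and Eq.(\ref{ae4})$-a_3\cdot$Eq.(\ref{ae1})$+a_2\cdot$Eq.(\ref{ae2}), which are exactly the negatives of yours, and likewise relies only on $a_2^2+a_3^2=1$ and $a_2b_2+a_3b_3=0$ (i.e. $P_{ab}=0$) to kill the quadratic terms and the $x_2$, $x_3$ coefficients. Your observation that these linear relations come from the same combinations as (\ref{1aeq3})--(\ref{1aeq4}) and hold for any solution independently of the value of $x_0$ is a valid (and slightly more explicit) framing of the same computation.
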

\begin{proof}
	Note that $a_2^2+a_3^2=1$ and $a_2b_2+a_3b_3=0$.
	Using Eq.(\ref{ae3})$-$Eq.(\ref{ae1})$\times a_2-$Eq.(\ref{ae2})$\times a_3$, we have
	\begin{equation}\label{new3}(b_2-a_3b_1)x_0+(a_2b_1+b_3)x_1+(a_3b_2-a_2b_3-b_1)x_3+c_2-a_2c_0-a_3c_1=0.\end{equation}
	Using Eq.(\ref{ae4})$-$Eq.(\ref{ae1})$\times a_3+$ Eq.(\ref{ae2})$\times a_2$, we have
	\begin{equation}\label{new4}(a_2b_1+b_3)x_0+(a_3b_1-b_2)x_1+(a_2b_3-a_3b_2+b_1)x_2+c_3+a_2c_1-a_3c_0=0.\end{equation}
	This completes the proof.
\end{proof}

Suppose that Equation II with $P_{ab}=0$  has a solution $x\in SZ$.
By Proposition \ref{prop5.1}, under the condition  $P_{ab}=0$, we have
      \begin{eqnarray}
 (b_2-a_3b_1)x_0+(a_2b_1+b_3)x_1+(a_3b_2-a_2b_3-b_1)x_3+t_1=0,\label{2new3}\\
 (a_2b_1+b_3)x_0+(a_3b_1-b_2)x_1+(a_2b_3-a_3b_2+b_1)x_2+t_2=0.\label{2new4}
\end{eqnarray}
 Since
$$\left\langle 2x_0a+b,2x_0a+b \right\rangle=4x_0P_{ab}+I_b=0.$$  By our assumption $P_{ab}=0$, we must have $I_b=0$.
 By $P_{ab}=I_b=0$, we have $$b_1^2-(a_3b_2-a_2b_3)^2=0.$$
 Thus we have $\delta=2b_1$ or $\delta=0$.
  We divide our consider into two subcases: $$\delta=2b_1\mbox{ and }\delta=0.$$
We mention that $\delta=2b_1$ is $P_{\bi a ,b}=0$ and $\delta=0$ is $P_{a \bi ,b}=0$
\subsubsection{Subcase $P_{\bi a ,b}=0$}
   We begin with the case $\delta=2b_1$, that is $b_1=-a_3b_2+a_2b_3$.

If $b_1=-a_3b_2+a_2b_3$, then by $P_{ab}=0$ and $I_a=0$ we have $b_2-a_3b_1=2b_2,a_2b_1+b_3=2b_3.$
 Thus \begin{equation}\label{b3b2c1}b_3=a_2b_1,b_2=-a_3b_1.\end{equation} By our assumption $b\neq 0$, we have $b_1\neq 0$.
  Hence Eqs. (\ref{2new3}) and (\ref{2new4}) become
     \begin{eqnarray*}
  2b_2x_0+2b_3x_1-2b_1x_3+t_1=0,\\
 2b_3x_0-2b_2x_1+2b_1x_2+t_2=0.
\end{eqnarray*}
  From the above and Eq.(\ref{b3b2c1}), we get
      \begin{eqnarray}
   x_2&=&-a_2x_0-a_3x_1-\frac{t_2}{2b_1},\label{s5newx2}\\
  x_3&=&-a_3x_0+a_2x_1+\frac{t_1}{2b_1}.\label{s5newx3}
\end{eqnarray}
Substituting   the above two formulas of $x_2$ and $x_3$ in Eq.(\ref{ae2}),
  we obtain
  \begin{eqnarray*}
-\frac{a_2t_1+a_3t_2}{b_1}x_0-\frac{a_2t_2-a_3t_1}{2}+c_1=0.
   \end{eqnarray*}
  If $a_2t_1+a_3t_2=0$ then we must have  $-\frac{a_2t_2-a_3t_1}{2}+c_1=0$ and in this case  $x_0$ is arbitrary.
If $a_2t_1+a_3t_2\neq 0$ then $$x_0=\frac{(a_3t_1-a_2t_2+2c_1)b_1}{2(a_2t_1+a_3t_2)}.$$

  Substituting $x_2$ and $x_3$ of  (\ref{s5newx2}) and (\ref{s5newx3}) in Eq.(\ref{ae1}),
    we obtain
\begin{eqnarray*}
\frac{a_2t_1+a_3t_2}{b_1}x_1+\frac{t_1^2+t_2^2}{4b_1^2}+\frac{a_2t_1+a_3t_2}{2}+c_0=0.
   \end{eqnarray*}
If $a_2t_1+a_3t_2=0$ then we need $\frac{t_1^2+t_2^2}{4b_1^2}+c_0=0$ and $x_1$ is arbitrary.
If $a_2t_1+a_3t_2\neq 0$ then $$x_1=-\frac{t_1^2+t_2^2+2b_1^2(a_2t_1+a_3t_2)+4b_1^2c_0}{4b_1(a_2t_1+a_3t_2)}.$$

By the above reasoning process, we figure out the following condition.

 \begin{defi}\label{def5.2}
  For the coefficients $a,b,c$ in Equation II  such that $P_{ab}=0$ and $I_b=0$,
        $(a,b,c)$   satisfies  {\bf Condition 2} if the following two conditions hold:
\begin{itemize}
		\item [(1)]  $P_{\bi a,b}=0;$
\item [(2)]  If $a_2t_1+a_3t_2=0$ then  $a_2t_2-a_3t_1-2c_1=0$ and  $t_1^2+t_2^2+4b_1^2c_0=0$.
\end{itemize}
 \end{defi}

Summarizing the previous results, we obtain the following theorem.
\begin{thm}\label{thm3.2}
 Equation II with $P_{ab}=0$ and $P_{\bi a,b}=0$ has a  solution $x\in SZ$  if and only if Condition 2 holds.
If Condition 2 holds, then we have the following cases.
	\begin{itemize} \item [(1)] If  $a_2t_1+a_3t_2\neq 0$ then   Equation II  has a solution
 $$x=x_0+x_1\bi+x_2\bj+x_3\bk,$$
where
\begin{eqnarray*}\label{s5x0}x_0=\frac{(a_3t_1-a_2t_2+2c_1)b_1}{2(a_2t_1+a_3t_2)}
   \end{eqnarray*}
 \begin{eqnarray*}\label{s5x1}x_1=-\frac{t_1^2+t_2^2+2b_1^2(a_2t_1+a_3t_2)+4b_1^2c_0}{4b_1(a_2t_1+a_3t_2)},
   \end{eqnarray*}
and
  \begin{eqnarray*}
  x_2&=&-a_2x_0-a_3x_1-\frac{t_2}{2b_1},\\
  x_3&=&-a_3x_0+a_2x_1+\frac{t_1}{2b_1}.
\end{eqnarray*}
 \item [(2)]If  $a_2t_1+a_3t_2=0$
 then   Equation II  has solutions
 $$x=x_0+x_1\bi+x_2\bj+x_3\bk,\forall x_0,x_1\in \br,$$
where
  \begin{eqnarray*}
   x_2&=&-a_2x_0-a_3x_1-\frac{t_2}{2b_1},\\
  x_3&=&-a_3x_0+a_2x_1+\frac{t_1}{2b_1}.
\end{eqnarray*}
\end{itemize}
\end{thm}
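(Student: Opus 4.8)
The plan is to prove the stated equivalence by reducing the four real equations \eqref{ae1}--\eqref{ae4} to a decoupled pair of scalar linear equations and then reading off the solvability alternatives. The starting point, already exploited in Proposition \ref{prop5.1}, is that because $a$ has no $\bi$-component the combinations forming \eqref{2new3} and \eqref{2new4} (namely \eqref{ae3} minus $a_2$ times \eqref{ae1} minus $a_3$ times \eqref{ae2}, and \eqref{ae4} minus $a_3$ times \eqref{ae1} plus $a_2$ times \eqref{ae2}) annihilate the quadratic block $x_0^2-x_1^2+x_2^2+x_3^2$. This replacement is invertible, since adding the same multiples of \eqref{ae1} and \eqref{ae2} back recovers \eqref{ae3} and \eqref{ae4}; hence the system \eqref{ae1}--\eqref{ae4}, which is the component form of $ax^2+bx+c=0$, has the same solution set as the system consisting of \eqref{ae1}, \eqref{ae2}, \eqref{2new3} and \eqref{2new4}. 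I would work with the latter throughout.

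For necessity I would start from a solution $x=x_0+x_1\bi+x_2\bj+x_3\bk\in SZ$. Membership in $SZ$ together with $P_{ab}=0$ forces $I_b=0$ through \eqref{inpax0}, while the hypothesis $P_{\bi a,b}=0$ gives $\delta=2b_1$; combined with $P_{ab}=0$ and $I_a=0$ these yield $b_2=-a_3b_1$ and $b_3=a_2b_1$ as in \eqref{b3b2c1}, and, since $b\neq0$, $b_1\neq0$. Then \eqref{2new3} and \eqref{2new4} collapse to a $2\times2$ system, which I would solve for $x_2,x_3$ to obtain \eqref{s5newx2}--\eqref{s5newx3}. Substituting these into \eqref{ae2} and into \eqref{ae1} and using $a_2^2+a_3^2=1$ produces, after cancellation, one equation linear in $x_0$ alone and one linear in $x_1$ alone, each carrying coefficient $\pm(a_2t_1+a_3t_2)/b_1$. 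When this coefficient is nonzero the roots are uniquely determined; when it vanishes the two equations degenerate to the constant constraints $a_2t_2-a_3t_1-2c_1=0$ and $t_1^2+t_2^2+4b_1^2c_0=0$, which are precisely the requirements of part (2) of Condition 2. Thus Condition 2 holds.

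For sufficiency I would reverse the construction. Assuming Condition 2, so that $P_{ab}=I_b=0$, $\delta=2b_1$ (hence again $b_2=-a_3b_1$, $b_3=a_2b_1$, $b_1\neq0$), and the two degeneracy constraints where relevant, I would define $x_2,x_3$ from $x_0,x_1$ by \eqref{s5newx2}--\eqref{s5newx3} and treat the reduced \eqref{ae1} and \eqref{ae2} as the defining equations for $x_1$ and $x_0$. If $a_2t_1+a_3t_2\neq0$ each reduced equation has a unique root, yielding the single explicit solution of case (1); if $a_2t_1+a_3t_2=0$ then part (2) of Condition 2 makes both reduced equations hold identically, leaving $x_0$ and $x_1$ free, which is case (2). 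In either case all of \eqref{ae1}--\eqref{ae4} hold by the equivalence of the first step, so $ax^2+bx+c=0$, and $x\in SZ$ automatically because $\langle 2x_0a+b,2x_0a+b\rangle=4x_0P_{ab}+I_b=0$ for every $x_0$.

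The one genuinely delicate step is the decoupling: substituting \eqref{s5newx2}--\eqref{s5newx3} into the truly quadratic equation \eqref{ae1} must collapse it to an equation linear in $x_1$, with the $x_0^2$, $x_0$, and $-b_1x_1$ contributions all cancelling. This cancellation hinges on the identities $a_2^2+a_3^2=1$, $b_2=-a_3b_1$ and $b_3=a_2b_1$, and is the sole place where a careful (if routine) expansion is required. Once it is checked, together with the invertibility of the system reduction, the remaining case analysis is pure bookkeeping.
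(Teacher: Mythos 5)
Your proposal is correct and follows essentially the same route as the paper: reduce to Eqs.(\ref{2new3})--(\ref{2new4}) via Proposition \ref{prop5.1}, specialize $b_2=-a_3b_1$, $b_3=a_2b_1$ under $P_{\bi a,b}=0$, solve for $x_2,x_3$ as in (\ref{s5newx2})--(\ref{s5newx3}), substitute into Eqs.(\ref{ae1})--(\ref{ae2}), and split on whether $a_2t_1+a_3t_2$ vanishes. Your explicit observations that the passage from (\ref{ae1})--(\ref{ae4}) to the system (\ref{ae1}), (\ref{ae2}), (\ref{2new3}), (\ref{2new4}) is invertible (so the constructed $x$ really solves the original equation) and that $x\in SZ$ is automatic from $4x_0P_{ab}+I_b=0$ are points the paper leaves implicit, but the argument is the same.
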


\begin{exam}\label{examthm5.3}
	Consider the quadratic equation $(1+\bj)x^2 +(\bi+\bk)x+1-\bi=0$.
That is, $a=1+\bj,b=\bi+\bk$ and $c=1-\bi$. In this case $t_1=t_2=-1$ and $a_2t_1+a_3t_2=-1$.
The equation $ax^2+bx+c=0$ has a solution
$x=\frac{1}{2}+\bi+\frac{1}{2}\bk.$
\end{exam}

\begin{exam}\label{examthm5.4}
	Consider the quadratic equation $(1+\bj)x^2 +(\bi+\bk)x-1+\bi-\bj+\bk=0$.
That is, $a=1+\bj,b=\bi+\bk$ and $c=-1+\bi-\bj+\bk$.
In this case $t_1=0,t_2=2$ and $a_2t_1+a_3t_2=0$.  The equation $ax^2+bx+c=0$ has  solutions
$x=x_0+x_1\bi-(1+x_0)\bj+x_1\bk,\forall x_0,x_1\in \br.$
\end{exam}
\subsubsection{Subcase $P_{ a \bi,b}=0$}
We now consider the  second case $\delta=0$, that is, $b_1=a_3b_2-a_2b_3$.  If $b_1=a_3b_2-a_2b_3$ then by $P_{ab}=0$ and
$I_a=0$
we have $$b_2-a_3b_1=a_2(a_2b_2+a_3b_3)=0,\,a_2b_1+b_3=a_3(a_2b_2+a_3b_3)=0.$$
So we have
\begin{equation}\label{b3b2c2}b_2=a_3b_1,b_3=-a_2b_1.\end{equation}
Hence we have
\begin{equation}\label{ab1i}b=ab_1\bi\end{equation}

From the above formulas, Eqs.(\ref{2new3}) and (\ref{2new4}) imply that
\begin{equation}\label{ac1}c_2-a_2c_0-a_3c_1=0,\,c_3+a_2c_1-a_3c_0=0.\end{equation}
By $I_a=0$ and the above two conditions, we have
\begin{equation}\label{ac2}P_{ac}=\left\langle a,c\right\rangle=c_0-a_2c_2-a_3c_3=0.\end{equation}
From this, we get
\begin{equation}\label{ac3}c_1=a_3c_2-a_2c_3.\end{equation}
Eqs. (\ref{ac1})-(\ref{ac3}) are equivalent to the condition \begin{equation}\label{acend}ac=2c.\end{equation}
Under the condition $P_{ab}=I_a=I_b=0,b_2=a_3b_1,b_3=-a_2b_1$ and $ac=2c$, we have
\begin{center}Eq.(\ref{ae3})$=$ Eq.(\ref{ae1})$\times a_2+$Eq.(\ref{ae2})$\times a_3$ and
Eq.(\ref{ae4})$=$ Eq.(\ref{ae1})$\times a_3-$Eq.(\ref{ae2})$\times a_2$.\end{center}
Hence in this case Equation II  only has two independent equalities Eqs.(\ref{ae1}) and (\ref{ae2}), which can be reformulated
as
\begin{eqnarray}
x_0^2+2(a_2x_2+a_3x_3)x_0-x_1^2+x_2^2+x_3^2-b_1x_1+a_3b_1x_2-a_2b_1x_3+c_0=0,\label{ab0eq1}\\
x_0(2x_1-2a_2x_3+2a_3x_2+b_1)=b_1(a_2x_2+a_3x_3)-c_1.\label{ab0eq2}
\end{eqnarray}
  These are underdetermined system of equations.

Before going on, we make the following remark.
\begin{rem}
Note that $$ax^2+bx+c=ax^2+ab_1\bi x+\frac{ac}{2}=a(x^2+b_1\bi x+\frac{c}{2}).$$
We reformulate the Equation II
$$a(x^2+b_1\bi x+\frac{c}{2})=0.$$
By Lemma \ref{lemmoor}, the above equation is equivalent to
$$x^2+b_1\bi x+\frac{c}{2}=(1-a^+a)y=\frac{\bar{a}}{2}y=0, \forall y\in \bh_s.$$
For simplification, we express  it as
$$x^2+b_1\bi x+\frac{c}{2}+\bar{a}y=0, \forall y\in \bh_s.$$
In principle, we can solve the above equation for a specific $y$ by the approaches in \cite{caoaxiom,Ireneu,Irene,Opfer18,Opfer17}.
\end{rem}
To avoid introducing the arbitrary $y$, we choose another method as follows.

Note that $b_1\neq 0$.
By  Eq.(\ref{ab0eq2}), if $x_0=0$ then  $$a_2x_2+a_3x_3=\frac{c_1}{b_1}.$$
we treat now the cases $a_2=0$ and $a_2\neq 0$, respectively.

If $a_2=0$ then $a_3\neq 0$ and therefore $$x_3=\frac{c_1}{a_3b_1}.$$ Note that $a_3^2=1$.
Substituting $x_0=0$ and $x_3=\frac{c_1}{a_3b_1}$ in Eq.(\ref{ab0eq1}), we obtain
$$x_2^2+a_3b_1x_2+\frac{c_1^2}{b_1^2}+c_0-x_1^2-b_1x_1=0.$$
So we have a solution $$x=x_1\bi+x_2\bj+\frac{c_1}{a_3b_1}\bk,$$
where $$x_2=\frac{-a_3b_1\pm \sqrt{b_1^2-4(\frac{c_1^2}{b_1^2}+c_0-x_1^2-b_1x_1)}}{2}$$
and $x_1\in \br$ satisfies
$$\frac{c_1^2}{b_1^2}+c_0-\frac{b_1^2}{4}-x_1^2-b_1x_1\leq 0.$$

If $a_2\neq 0$ then \begin{equation}\label{addx2}x_2=\frac{c_1}{a_2b_1}-\frac{a_3}{a_2}x_3.\end{equation}
Substituting $x_0=0$ and the above formula 
in Eq.(\ref{ab0eq1}), we obtain
$$x_1^2+b_1x_1+t=0,$$
where $$t=-\frac{1}{a_2^2}x_3^2+(\frac{2a_3c_1+a_2b_1^2}{a_2^2b_1})x_3-(c_0+\frac{a_3c_1}{a_2}+\frac{c_1^2}{a_2^2b_1^2}).$$
Hence $x_1$ can be expressed by $x_3$ as
\begin{equation}\label{addx1}x_1=\frac{-b_1\pm \sqrt{b_1^2-4t}}{2}\end{equation}
and $x_3\in \br$ satisfies
\begin{equation}\label{addb1}b_1^2-4t=\frac{4}{a_2^2}[x_3^2-(\frac{2a_3c_1+a_2b_1^2}{b_1})x_3+\frac{4(a_2^2b_1^2c_0+a_2b_1^2a_3c_1+c_1^2)+b_1^4a_2^2}{4b_1^2}]
\geq 0.\end{equation}
So we have  solutions $$x=x_1\bi+x_2\bj+x_3\bk,$$
where $x_1,x_2$ are given by (\ref{addx1}) and (\ref{addx2}), and $x_3\in \br$ satisfies (\ref{addb1}).

If $x_0\neq 0$ then from  Eq.(\ref{ab0eq2}) we have
 $$2x_1-2a_2x_3+2a_3x_2+b_1=\frac{a_3b_1x_3+a_2b_1x_2-c_1}{x_0}.$$
From this, we get $$x_1=(\frac{a_2b_1}{2x_0}-a_3)x_2+(\frac{a_3b_1}{2x_0}+a_2)x_3-\frac{c_1+b_1x_0}{2x_0}.$$
Substituting the above formula in  Eq.(\ref{ab0eq1}) and rearranging the equation, we obtain
\begin{eqnarray*}
&&x_0^4+2(a_2x_2+a_3x_3)x_0^3+[(a_2x_2+a_3x_3)^2+b_1(a_3x_2-a_2x_3)+c_0+\frac{b_1^2}{4}]x_0^2\\
&&+[a_2a_3b_1(x_2^2-x_3^2)+b_1(a_3^2-a_2^2)x_2x_3+c_1(a_2x_3-a_3x_2)]x_0-\frac{[b_1(a_2x_2+a_3x_3)-c_1]^2}{4}=0.
\end{eqnarray*}
Let \begin{eqnarray*}
f(z)&=&z^4+2(a_2x_2+a_3x_3)z^3+[(a_2x_2+a_3x_3)^2+b_1(a_3x_2-a_2x_3)+c_0+\frac{b_1^2}{4}]z^2\\
&&+[a_2a_3b_1(x_2^2-x_3^2)+b_1(a_3^2-a_2^2)x_2x_3+c_1(a_2x_3-a_3x_2)]z-\frac{[b_1(a_2x_2+a_3x_3)-c_1]^2}{4}.
\end{eqnarray*}
Then $$f(0)=-\frac{[b_1(a_2x_2+a_3x_3)-c_1]^2}{4}\leq 0,\lim_{z\to +\infty} f(z)=+\infty,\lim_{z\to -\infty} f(z)=+\infty.$$

If $a_2x_2+a_3x_3\neq \frac{c_1}{b_1}$ then  $f(0)<0$,  and $f(z)=0$  has at least two real  solutions  $z_1\in (-\infty,0)$
and $z_2\in (0,\infty)$.
 Let $T\in \br$ be a solution of $f(z)=0$ with $a_2x_2+a_3x_3\neq \frac{c_1}{b_1}$.
 Then Equation II has a solution
 $$x=T+x_1\bi+x_2\bj+x_3\bk,$$
  where
$$x_1=(\frac{a_2b_1}{2T}-a_3)x_2+(\frac{a_3b_1}{2T}+a_2)x_3-\frac{c_1+b_1T}{2T}.$$

If \begin{equation}\label{ab0eqx23}a_2x_2+a_3x_3=\frac{c_1}{b_1},\end{equation} then  by (\ref{ab0eq2}) and our assumption
$x_0\neq 0$
 we have
\begin{eqnarray}
               2x_1-2a_2x_3+2a_3x_2+b_1=0.\label{ab0eq3}
  \end{eqnarray}
By (\ref{ab0eqx23}) and (\ref{ab0eq3}), we obtain that
\begin{eqnarray*}
 x_2=\frac{a_2c_1}{b_1}-\frac{a_3b_1}{2}-a_3x_1,\label{ab0eq4}\\
 x_3=\frac{a_3c_1}{b_1}+\frac{a_2b_1}{2}+a_2x_1.\label{ab0eq5}
  \end{eqnarray*}
Substituting the above formulas in (\ref{ab0eq1}), we obtain
\begin{eqnarray}
             x_0^2+\frac{2c_1}{b_1}x_0-b_1x_1+\frac{c_1^2}{b_1^2}-\frac{b_1^2}{4}+c_0=0.
  \end{eqnarray}
Hence
\begin{eqnarray*}
            x_1= \frac{1}{b_1}x_0^2+\frac{2c_1}{b_1^2}x_0+\frac{c_1^2}{b_1^3}-\frac{b_1}{4}+\frac{c_0}{b_1}.\label{s5newx1}
  \end{eqnarray*}
Form the above description,  Equation II has  solutions   $$x=x_0+x_1\bi+x_2\bj+x_3\bk,\forall x_0\neq 0,$$
where $x_1,x_2,x_3$ are expressed  by formulas containing $x_0$ as above.

Summarizing the previous results, we obtain the following theorem.

\begin{thm}\label{thm5.3}
 Equation II with $P_{ab}=0$ and $P_{a \bi,b}=0$  has a  solution $x\in SZ$  if and only if
 $ac=2c$.
If  Equation II is solvable then we have the following cases:
\begin{itemize}
 \item [(1)] Case $x_0=0$:
\begin{itemize}
  \item [(1.1)] if  $a_2=0$ then   Equation II has  solutions:$$x=x_1\bi+x_2\bj+\frac{c_1}{a_3b_1}\bk,$$
      where $$x_2=\frac{-a_3b_1\pm \sqrt{b_1^2-4(\frac{c_1^2}{b_1^2}+c_0-x_1^2-b_1x_1)}}{2}$$
      and $x_1$ is real numbers satisfies
      $$x_1^2+b_1x_1+\frac{b_1^2}{4}-\frac{c_1^2}{b_1^2}-c_0\geq 0.$$

  \item [(1.2)] if   $a_2\neq 0$ then   Equation II has  solutions:$$x=x_1\bi+x_2\bj+x_3\bk,$$
      where $x_3\in \br$  satisfies $$w=x_3^2-(\frac{2a_3c_1+a_2b_1^2}{b_1})x_3
      +\frac{4(a_2^2b_1^2c_0+a_2b_1^2a_3c_1+c_1^2)+b_1^4a_2^2}{4b_1^2}\ge 0$$
      and
      $$x_1=\frac{-b_1}{2}\pm \frac{\sqrt{w}}{a_2},$$
      $$x_2=\frac{c_1}{a_2b_1}-\frac{a_3}{a_2}x_3.$$
     \end{itemize}
\item [(2)] Case $x_0\neq 0$:
\begin{itemize}
  \item [(2.1)] $a_2x_2+a_3x_3\neq \frac{c_1}{b_1}$:
  Equation II has  solutions:
             $$x=T+x_1\bi+x_2\bj+x_3\bk,$$  where
            $T$ be a real solution  of the following equation:
             \begin{eqnarray*}
 z^4+2(a_2x_2+a_3x_3)z^3+[(a_2x_2+a_3x_3)^2+b_1(a_3x_2-a_2x_3)+c_0+\frac{b_1^2}{4}]z^2\\
 +[a_2a_3b_1(x_2^2-x_3^2)+b_1(a_3^2-a_2^2)x_2x_3+c_1(a_2x_3-a_3x_2)]z-\frac{[b_1(a_2x_2+a_3x_3)-c_1]^2}{4}=0
   \end{eqnarray*}
   and
$$x_1=(\frac{a_2b_1}{2T}-a_3)x_2+(\frac{a_3b_1}{2T}+a_2)x_3-\frac{c_1+b_1T}{2T}.$$

\item [(2.2)] $a_2x_2+a_3x_3=\frac{c_1}{b_1}$:
  Equation II has  solutions:
             $$x=x_0+x_1\bi+x_2\bj+x_3\bk,\forall x_0\neq 0,$$  where
     \begin{eqnarray*}
            x_1&=& \frac{1}{b_1}x_0^2+\frac{2c_1}{b_1^2}x_0+\frac{c_1^2}{b_1^3}-\frac{b_1}{4}+\frac{c_0}{b_1},\\
  x_2&=&\frac{a_2c_1}{b_1}-\frac{a_3b_1}{2}-a_3x_1,\\
 x_3&=&\frac{a_3c_1}{b_1}+\frac{a_2b_1}{2}+a_2x_1.
  \end{eqnarray*}
  \end{itemize}
  \end{itemize}
\end{thm}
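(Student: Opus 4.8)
The plan is to treat the stated equivalence by exploiting the two standing hypotheses $P_{ab}=0$ and $P_{a\bi,b}=\delta=0$ to first pin down the shape of $b$, then to read the condition $ac=2c$ off the real nonlinear system (\ref{rsym2}), and finally to exhibit all solutions from a reduced two-equation subsystem. I would begin with the purely algebraic consequences of the hypotheses. Writing $b_1=a_3b_2-a_2b_3$ from $\delta=0$ and using $a_2^2+a_3^2=1$, a direct expansion gives $I_b=b_1^2-b_2^2-b_3^2=-(a_2b_2+a_3b_3)^2=-P_{ab}^2$, so $P_{ab}=0$ forces $I_b=0$. Feeding $I_a=I_b=P_{ab}=0$ back in yields $b_2=a_3b_1$ and $b_3=-a_2b_1$ with $b_1\neq0$ (as $b\neq0$), which is exactly the compact form $b=ab_1\bi$. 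A key dividend is that $2x_0a+b=a(2x_0+b_1\bi)$ for every real $x_0$, so $I_{2x_0a+b}=I_a\,I_{2x_0+b_1\bi}=0$; hence in this subcase \emph{every} solution of Equation II automatically lies in $SZ$, and the $SZ$ restriction may be dropped from the bookkeeping.

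For the necessity of $ac=2c$ I would invoke Proposition~\ref{prop5.1}: any solution satisfies the linear relations (\ref{2new3}) and (\ref{2new4}). Under $\delta=0$ together with $b_2=a_3b_1,\ b_3=-a_2b_1$, every coefficient of $x_0,x_1,x_2,x_3$ in these two relations vanishes, so they collapse to $t_1=0$ and $t_2=0$, i.e.\ $c_2-a_2c_0-a_3c_1=0$ and $c_3+a_2c_1-a_3c_0=0$. Using $a_2^2+a_3^2=1$ I would then deduce $P_{ac}=c_0-a_2c_2-a_3c_3=0$ and $c_1=a_3c_2-a_2c_3$; matching these four scalar identities against the coordinatewise expansion of $ac$ shows they are precisely the single equation $ac=2c$. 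Thus a solution can exist only if $ac=2c$.

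For sufficiency I would show that when $ac=2c$ the four scalar equations (\ref{ae1})--(\ref{ae4}) are not independent: one verifies the identities Eq.(\ref{ae3})$=a_2\,$Eq.(\ref{ae1})$+a_3\,$Eq.(\ref{ae2}) and Eq.(\ref{ae4})$=a_3\,$Eq.(\ref{ae1})$-a_2\,$Eq.(\ref{ae2}) (using $b=ab_1\bi$, $I_a=0$ and $ac=2c$), so the whole system reduces to the pair (\ref{ab0eq1})--(\ref{ab0eq2}). To produce an actual root it suffices to take $x_0=0$: then (\ref{ab0eq2}) pins the combination $a_2x_2+a_3x_3=c_1/b_1$ while (\ref{ab0eq1}) becomes a single quadratic constraint whose discriminant is nonnegative once the free variable is taken large enough, so a real solution always exists. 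This establishes the ``if'' direction; the explicit formulas then follow by carrying the same two-equation analysis through the four branches $x_0=0,\ a_2=0$; $x_0=0,\ a_2\neq0$; $x_0\neq0,\ a_2x_2+a_3x_3\neq c_1/b_1$; and $x_0\neq0,\ a_2x_2+a_3x_3=c_1/b_1$.

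The step I expect to be most delicate is the branch $x_0\neq0$ with $a_2x_2+a_3x_3\neq c_1/b_1$. There, solving (\ref{ab0eq2}) for $x_1$ and substituting into (\ref{ab0eq1}) produces a quartic $f(z)=0$ in $z=x_0$ whose coefficients involve $x_2,x_3$, which I cannot resolve in closed form; existence of a root must instead be argued qualitatively. The clean route is the sign information $f(0)=-\tfrac14[b_1(a_2x_2+a_3x_3)-c_1]^2<0$ together with $f(z)\to+\infty$ as $z\to\pm\infty$, so the intermediate value theorem delivers real roots $z_1<0<z_2$, each of which yields a genuine solution after recovering $x_1$ from (\ref{ab0eq2}). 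The remaining care is bookkeeping: ensuring every $x$ built from the reduced pair actually satisfies all of (\ref{ae1})--(\ref{ae4}), which is guaranteed precisely by the dependence identities established in the sufficiency step.
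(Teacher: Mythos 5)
Your proposal is correct and follows essentially the same route as the paper's proof: reduce $b$ to the compact form $b=ab_1\bi$, use the linear relations of Proposition \ref{prop5.1} to collapse to $t_1=t_2=0$ and hence to $ac=2c$, observe that the four real equations reduce to the pair (\ref{ab0eq1})--(\ref{ab0eq2}), and run the same four-branch analysis, including the quartic $f(z)$ with $f(0)<0$ and the intermediate value theorem for branch (2.1). Your two extra observations --- that $\delta=0$ and $a_2^2+a_3^2=1$ give $I_b=-P_{ab}^2$ (so $I_b=0$ holds algebraically, not merely as a consequence of $SZ$ membership), and that $2x_0a+b=a(2x_0+b_1\bi)\in Z(\bh_s)$ so every solution automatically lies in $SZ$ --- are correct refinements that slightly strengthen the paper's bookkeeping but do not alter the argument.
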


\begin{exam}\label{exam5.5}
	Consider the quadratic equation \begin{equation}\label{exam5.5eq}(1+\bk)x^2 +(\bi+\bj)x+1+2\bi+2\bj+\bk= 0.\end{equation}
That is, $a=1+\bk,b=\bi+\bj$ and $c=1+2\bi+2\bj+\bk$.  Then we have the following cases:
\begin{itemize}
  \item[(1.1)]
  Eq.(\ref{exam5.5eq}) has  the following solutions
  $$x=x_1\bi-\Big(\frac{1}{2}\pm \sqrt{x_1^2+x_1-\frac{19}{4}}\Big)\bj+2\bk,$$
  where $x_1$ is arbitrary but satisfies $x_1^2+x_1-\frac{19}{4}\geq 0$.
  \item[(2.1)]  Eq.(\ref{exam5.5eq}) has  the following solutions

            $$x=T+x_1\bi+x_2\bj+x_3\bk,\forall x_3\neq 2, x_2 \in \br$$  where
            $T$ be a real solution  of the following equation:  \begin{eqnarray}\label{examp5.5x0}
 z^4+2x_3z^3+(x_3^2+x_2+\frac{5}{4})z^2+(x_2x_3-2x_2)z-\frac{(x_3-2)^2}{4}=0
   \end{eqnarray}
   and
$$x_1=-x_2+\frac{1}{2T}x_3-\frac{2+T}{2T}.$$
For example, if we take $x_2=x_3=1$, then Eq. (\ref{examp5.5x0}) has real solution $T_1=0.3914$ and $T_2=-0.1675$.
So we have solutions
$$x=0.3914-2.7773\bi+\bj+\bk,\mbox{  and  } x=-0.1675+1.4857\bi+\bj+\bk.$$
\item[(2.2)]  When $x_3=2$,  Eq.(\ref{exam5.5eq}) has  the following solutions
             $$x=x_0+x_1\bi-(x_1+\frac{1}{2})\bj+2\bk,\forall x_0\neq 0,$$  where
     \begin{eqnarray*}
            x_1= x_0^2+4x_0+\frac{19}{4}.
  \end{eqnarray*}
  \end{itemize}
\end{exam}

\begin{exam}\label{exam5.6}
	Consider the quadratic equation \begin{equation}\label{exam5.6eq}(1+\bj)x^2 +(-\bi+\bk)x-1+\bi-\bj-\bk= 0.\end{equation}
That is, $a=1+\bj,b=-\bi+\bk$ and $c=-1+\bi-\bj-\bk$.  Then we have the following cases.
\begin{itemize}
  \item[(1.2)] Eq.(\ref{exam5.6eq}) has  the following solutions
  $$x=(1+x_3)\bi-\bj+x_3\bk\mbox{ and  } x=-x_3\bi-\bj+x_3\bk,  \forall x_3\in \br.$$
  \item[(2.1)] Eq.(\ref{exam5.6eq}) has  the following solutions
            $$x=T+x_1\bi+x_2\bj+x_3\bk,\forall x_2\neq -1,x_3\in \br$$  where
            $T$ be a real solution  of the following equation:  \begin{eqnarray}\label{examp5.6x0}
 z^4+2x_2z^3+(x_2^2+x_3-\frac{3}{4})z^2+(x_2x_3+x_3)z-\frac{(x_2+1)^2}{4}=0
   \end{eqnarray}
   and
$$x_1=-\frac{1}{2T}x_2+x_3+\frac{T-1}{2T}.$$
For example, if we take $x_2=x_3=1$, then Eq. (\ref{examp5.6x0}) has real solution $x_0=-2$ and $x_0=0.362$.
So we have solutions
$$x=-2+2\bi+\bj+\bk,\mbox{  and  } x=0.3620-1.2621\bi+\bj+\bk.$$
\item[(2.2)]  When $x_2=-1$, Eq.(\ref{exam5.6eq}) has  the following solutions
             $$x=x_0+x_1\bi-\bj+(x_1-\frac{1}{2})\bk,\forall x_0\neq 0,$$  where
     \begin{eqnarray*}
            x_1= -x_0^2+2x_0+\frac{1}{4}.
  \end{eqnarray*}
 \end{itemize}
\end{exam}

\section{Equation II  for SI}
In this section we consider  Equation II  for $SI$.  We relabel
the real nonlinear system (\ref{rsym1}) as follows.
\begin{eqnarray}
 N(2TP_{ab}+I_b+2P_{ac})-I_c=0,\label{1ieq1}\\
2P_{ab}T^2+(2P_{ac}+I_b)T-2NP_{ab}+2P_{bc}=0.\label{1ieq2}
\end{eqnarray}

We treat the case $P_{ab}\neq 0$ and $P_{ab}= 0$ separately.

\subsection{Case $P_{ab}\neq 0$}

\begin{thm}\label{thm5.4}Equation II with $P_{ab}\neq 0$ has a solution
$$x=(Ta+b)^{-1}(aN-c),$$
where $T$ is a real solution of the following  cubic equation
\begin{equation}\label{Tab}4P_{ab}^2T^3+[4P_{ab}(2P_{ac}+I_b)]T^2+[4P_{ab}P_{bc}
+(2P_{ac}+I_b)^2]T+2P_{bc}(2P_{ac}+I_b)-2P_{ab}I_c=0. \end{equation}
and
\begin{equation}\label{Nab}N=\frac{2P_{ab}T^2+(2P_{ac}+I_b)T+2P_{bc}}{2P_{ab}}. \end{equation}
\end{thm}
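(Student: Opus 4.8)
The plan is to regard the real nonlinear system (\ref{1ieq1})--(\ref{1ieq2}), which the preceding discussion shows governs any $SI$ solution through $T=x+\bar x$ and $N=I_x$, as two polynomial equations in the real unknowns $T,N$; I would eliminate $N$ to reach the cubic (\ref{Tab}) and then reverse the construction to confirm that an admissible real root $T$ really yields a solution of $ax^2+bx+c=0$.

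First I would solve equation (\ref{1ieq2}) for $N$. Since $P_{ab}\neq0$, it rearranges at once to $N=\frac{2P_{ab}T^2+(2P_{ac}+I_b)T+2P_{bc}}{2P_{ab}}$, which is (\ref{Nab}). Substituting this into (\ref{1ieq1}) and multiplying through by $2P_{ab}$ turns the first equation into
\begin{equation*}
\bigl(2P_{ab}T^2+(2P_{ac}+I_b)T+2P_{bc}\bigr)\bigl(2P_{ab}T+I_b+2P_{ac}\bigr)=2P_{ab}I_c .
\end{equation*}
Expanding the product and collecting powers of $T$ is routine and produces exactly the cubic (\ref{Tab}); its leading coefficient $4P_{ab}^2$ is nonzero, so it is a genuine real cubic and therefore has at least one real root.

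The substantive step is the converse verification. For a real root $T$ of (\ref{Tab}) such that $Ta+b$ is invertible, i.e.\ $I_{Ta+b}=2TP_{ab}+I_b\neq0$, I take $N$ from (\ref{Nab}) and set $x=(Ta+b)^{-1}(aN-c)$, then check the two scalar identities $I_x=N$ and $x+\bar x=T$. Because $a\in Z(\bh_s)$ gives $I_a=0$ and because $a\bar c+c\bar a=2P_{ac}$, one has $I_{aN-c}=-2NP_{ac}+I_c$; combined with $I_{xy}=I_xI_y$ this yields $I_x=\frac{-2NP_{ac}+I_c}{2TP_{ab}+I_b}$, and equation (\ref{1ieq1}) forces $I_x=N$. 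Writing $\bar x=(N\bar a-\bar c)(Ta+b)/I_{Ta+b}$ and adding reproduces the fraction in (\ref{enf2}), so equation (\ref{1ieq2}) forces $x+\bar x=T$. With $2\Re(x)=T$ and $I_x=N$ established, the reduction identity $x^2=Tx-N$ gives $ax^2=Tax-aN$, whence $ax^2+bx+c=(Ta+b)x-aN+c=(aN-c)-aN+c=0$.

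The main obstacle is bookkeeping rather than insight: both verification identities require dividing by $I_{Ta+b}=2TP_{ab}+I_b$, so the invertibility of $Ta+b$ must be imposed throughout, and a real root of (\ref{Tab}) that violates $2TP_{ab}+I_b\neq0$ makes the inverse meaningless --- such a root corresponds to an $SZ$ solution rather than an $SI$ one and is excluded here. Tracking this nonvanishing condition and handling the conjugations carefully in the computation of $x+\bar x$ are the only delicate points.
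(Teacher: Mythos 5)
Your proposal is correct and its core is the same computation as the paper's proof: solve (\ref{1ieq2}) for $N$ (possible since $P_{ab}\neq 0$), substitute into (\ref{1ieq1}), clear the denominator $2P_{ab}$, and expand to get exactly the cubic (\ref{Tab}), which has a real root since its leading coefficient $4P_{ab}^2$ is nonzero. Where you go beyond the paper is the converse verification: the paper simply asserts ``then the corresponding solution is $x=(Ta+b)^{-1}(aN-c)$'' without checking that this $x$ actually annihilates $ax^2+bx+c$, relying implicitly on the earlier derivation of the system (\ref{rsym1}), which only shows that an existing $SI$ solution produces a pair $(T,N)$, not the reverse. Your argument closes this gap: using $I_a=0$ you compute $I_x=\frac{-2NP_{ac}+I_c}{2TP_{ab}+I_b}=N$ from (\ref{1ieq1}), recover $x+\bar x=T$ from (\ref{1ieq2}) via the identity behind (\ref{enf2}), and then the reduction $x^2=Tx-N$ gives $(Ta+b)x-aN+c=0$ identically. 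You also correctly isolate the one genuine hypothesis hidden in the theorem's formula, namely $I_{Ta+b}=2TP_{ab}+I_b\neq 0$; a real root of (\ref{Tab}) with $2TP_{ab}+I_b=0$ yields no $SI$ solution, a degenerate case neither the theorem statement nor the paper's proof addresses, so flagging it is the right call. In short: same elimination route, but your version supplies the verification and the invertibility caveat that make the statement actually proved rather than merely plausible.
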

\begin{proof}

If $P_{ab}\neq 0$ then by (\ref{1ieq2}) we get
\begin{equation}\label{Nab2}N=\frac{2P_{ab}T^2+(2P_{ac}+I_b)T+2P_{bc}}{2P_{ab}}. \end{equation}
Substituting the above $N$  in   (\ref{1ieq1}), we obtain
\begin{equation}\label{Tab2}4P_{ab}^2T^3+[4P_{ab}(2P_{ac}+I_b)]T^2+[4P_{ab}P_{bc}+(2P_{ac}+I_b)^2]T
+2P_{bc}(2P_{ac}+I_b)-2P_{ab}I_c=0. \end{equation}
Let $T$ be a real solution of the above cubic equation. Then the corresponding solution is
$$x=(Ta+b)^{-1}(aN-c).$$
\end{proof}

\begin{exam}\label{exam5.7}
	Consider the quadratic equation $(1+\bj)x^2 +(\bi+\bj)x-1+\bi=0$.
That is, $a=1+\bj,b=\bi+\bj$ and $c=-1+\bi$.   $P_{ab}=-1$. In this case
$T=-2, N=1$ and $$x=(Ta+b)^{-1}(aN-c)=-1.$$
\end{exam}
Combining this example with Example 3.2, we know that the set of solution of the equation $$(1+\bj)x^2 +(\bi+\bj)x-1+\bi=0$$
is
$$\{-1\}\cup \{x=x_1\bi+x_1\bj+\bk,\forall x_1\in \br\}.$$

\subsection{Case $P_{ab}=0$}
\begin{thm}\label{thm5.5}Equation II with $P_{ab}=0$ and $I_b+2P_{ac}\neq 0$
is solvable  and   $$x=(Ta+b)^{-1}(aN-c),$$  where $$N=\frac{I_c}{I_b+2P_{ac}},T= \frac{-2P_{bc}}{I_b+2P_{ac}}.$$
\end{thm}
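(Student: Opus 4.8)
The plan is to follow the same reduction used for the preceding $P_{ab}\neq 0$ case (Theorem \ref{thm5.4}), but now exploit that the vanishing of $P_{ab}$ collapses the nonlinear system (\ref{1ieq1})--(\ref{1ieq2}) into an explicitly solvable triangular system. First I would set $P_{ab}=0$ in (\ref{1ieq2}); this removes both the quadratic term $2P_{ab}T^2$ and the coupling term $-2NP_{ab}$, leaving the linear relation $(I_b+2P_{ac})T+2P_{bc}=0$. Since $I_b+2P_{ac}\neq 0$ by hypothesis, this gives $T=-2P_{bc}/(I_b+2P_{ac})$. Substituting $P_{ab}=0$ into (\ref{1ieq1}) similarly yields $N(I_b+2P_{ac})=I_c$, hence $N=I_c/(I_b+2P_{ac})$. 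This produces the claimed pair $(T,N)$ uniquely.

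The second, conceptually essential step is to check that the element $x:=(Ta+b)^{-1}(aN-c)$ built from this pair is genuinely a root rather than a merely formal candidate. Here I would invoke the general identity $x^2=2\Re(x)x-I_x$, valid for every $x\in\bh_s$, which converts $ax^2+bx+c=0$ into the linear equation $(Ta+b)x=aN-c$ exactly when $2\Re(x)=T$ and $I_x=N$. Thus it suffices to verify these two consistency conditions for the constructed $x$. Computing $I_x=I_{aN-c}/I_{Ta+b}=(-2NP_{ac}+I_c)/(2TP_{ab}+I_b)$ and imposing $I_x=N$ recovers precisely (\ref{1ieq1}); computing $x+\bar x=(-2TP_{ac}+2NP_{ab}-2P_{bc})/(2TP_{ab}+I_b)$ from the explicit formula established before Section 4 and imposing $x+\bar x=T$ recovers precisely (\ref{1ieq2}). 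Since our $(T,N)$ solve (\ref{1ieq1})--(\ref{1ieq2}) by construction, both consistency conditions hold, so $x^2=Tx-N$ and $(Ta+b)x=aN-c$, whence $ax^2+bx+c=(Ta+b)x-(aN-c)=0$.

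The one genuine obstacle is the invertibility of $Ta+b$, which legitimizes writing $(Ta+b)^{-1}$ and underlies the SI regime. Because $a\in Z(\bh_s)$ gives $I_a=0$, one has $I_{Ta+b}=T^2I_a+2TP_{ab}+I_b=2TP_{ab}+I_b=I_b$ once $P_{ab}=0$; thus $Ta+b$ is invertible if and only if $I_b\neq 0$, independently of the particular value of $T$. I would therefore record that the SI setting forces $I_b\neq 0$ here, since if $I_b=0$ and $P_{ab}=0$ then $Ta+b\in Z(\bh_s)$ for every $T$ and no solution in $SI$ can exist. Under this the denominators $2TP_{ab}+I_b=I_b$ appearing above are nonzero, so $x$ is well defined and all computations of the previous paragraph are valid. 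The remaining verifications are the routine inner-product evaluations $\Re(\bar aa)=I_a=0$, $\Re(\bar ac)=P_{ac}$, $\Re(\bar ba)=P_{ab}$, $\Re(\bar bc)=P_{bc}$, which I would not belabour.
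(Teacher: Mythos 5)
Your proposal is correct and follows essentially the same route as the paper: set $P_{ab}=0$ in the system (\ref{1ieq1})--(\ref{1ieq2}), solve the resulting linear equations for $(T,N)$ under the hypothesis $I_b+2P_{ac}\neq 0$, and reconstruct $x=(Ta+b)^{-1}(aN-c)$, with $I_b\neq 0$ (forced by the SI regime, since $P_{ab}=0$ gives $I_{Ta+b}=I_b$) guaranteeing invertibility of $Ta+b$. Your second paragraph, which verifies that the candidate $x$ genuinely satisfies $2\Re(x)=T$ and $I_x=N$ and hence is an actual root rather than a formal candidate, makes explicit a sufficiency step that the paper's own proof leaves implicit, relying instead on the general framework set up before Section 4.
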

\begin{proof}
Since $\left\langle 2x_0a+b,2x_0a+b \right\rangle=4x_0P_{ab}+I_b\neq 0$ and $P_{ab}=0$,
  we have $I_b\neq 0$.
       If $P_{ab}=0$ then by (\ref{1ieq1}) and (\ref{1ieq2}),  $(T,N)$ satisfies the real system
      \begin{eqnarray}
     N(I_b+2P_{ac})=I_c,\label{ieq1}\\
     (2P_{ac}+I_b)T=-2P_{bc}.\label{ieq2}
      \end{eqnarray}
     If $I_b+2P_{ac}\neq 0$ then $$N=\frac{I_c}{I_b+2P_{ac}},T= \frac{-2P_{bc}}{I_b+2P_{ac}}.$$
So the corresponding solution is
      $$x=(Ta+b)^{-1}(aN-c).$$
\end{proof}
\begin{exam}\label{exam5.8}
	Consider the quadratic equation $(1+\bj)x^2 +(2\bi+\bk)x+1+\bi+2\bj+\bk= 0$.
 That is, $a=1+\bj,b=2\bi+\bk$ and $c=1+\bi+2\bj+\bk$.  $P_{ab}=0, I_b+2P_{ac}=1$. In this case
$T=-2, N=-3$ and $$x=(Ta+b)^{-1}(aN-c)=-1+\frac{17}{3}\bi+\frac{1}{3}\bj+6\bk.$$
  \end{exam}

To treat  the case of $I_b+2P_{ac}=0$, we need the following  proposition.

\begin{pro}\label{prop5.3}
For the coefficients $a,b,c$ in Equation II, we assume that
$$P_{ab}=0,I_a=0,I_c=0,P_{bc}=0,I_b+2P_{ac}=0, I_b\neq 0.$$

Then we have
\begin{equation}\label{formu1}\frac{(a_2b_1+b_3)^2+(b_2-a_3b_1)^2}{\delta^2}=1,\end{equation}
\begin{equation}\label{formu2}\frac{a_3(b_2-a_3b_1)-a_2(a_2b_1+b_3)}{\delta}=-1,\end{equation}
\begin{equation}\label{formu3}\frac{a_3(a_2b_1+b_3)+a_2(b_2-a_3b_1)}{\delta}=0,\end{equation}
\begin{equation}\label{formu4}\frac{2t_2(a_2b_1+b_3)+2t_1(b_2-a_3b_1)}{\delta^2}
+\frac{b_3(b_2-a_3b_1)-b_2(a_2b_1+b_3)+2a_3t_1-2a_2t_2}{\delta}=0,\end{equation}
\begin{equation}\label{formu5}\frac{2t_2(a_3b_1-b_2)+2t_1(a_2b_1+b_3)}{\delta^2}
+\frac{(a_2b_1+b_3)b_3-b_2(a_3b_1-b_2)}{\delta}-b_1=0.\end{equation}
\end{pro}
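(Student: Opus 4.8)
The plan is to push everything back onto two scalar constraints on the pair $(a,b)$, namely $I_a=0$ and $P_{ab}=0$, which already determine $b_2,b_3$ up to the single free quantity $\delta$, and then to verify the five identities by direct substitution. First I would record that $I_a=0$ means $a_2^2+a_3^2=1$, and that $P_{ab}=\langle a,b\rangle=-(a_2b_2+a_3b_3)$, so $P_{ab}=0$ forces $(b_2,b_3)$ to be orthogonal to $(a_2,a_3)$; hence $(b_2,b_3)=s(-a_3,a_2)$ for a scalar $s$. Substituting this into $\delta=a_2b_3-a_3b_2+b_1$ gives $s=\delta-b_1$, so that
\[
 b_2-a_3b_1=-a_3\delta,\qquad a_2b_1+b_3=a_2\delta .
\]
These two relations are the heart of the argument. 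Plugging them into the numerators of (\ref{formu2}) and (\ref{formu3}) and using $a_2^2+a_3^2=1$ produces $-\delta$ and $0$, which are (\ref{formu2}) and (\ref{formu3}) after dividing by $\delta$; squaring and adding them gives $(b_2-a_3b_1)^2+(a_2b_1+b_3)^2=\delta^2$, which is (\ref{formu1}).

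Before dividing by $\delta$ I must check $\delta\neq0$. For this I would invoke the identity $m=P_{ab}^2+\delta^2=2b_1\delta-I_b$ already recorded in the text: with $P_{ab}=0$ it reads $\delta^2=2b_1\delta-I_b$, so $\delta=0$ would force $I_b=0$, contradicting the hypothesis $I_b\neq0$.

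For (\ref{formu4}) I would clear the denominator $\delta^2$ and substitute $b_2-a_3b_1=-a_3\delta$ and $a_2b_1+b_3=a_2\delta$. The terms carrying $t_1$ and the terms carrying $t_2$ then cancel in pairs, and what survives collapses to $-\delta^2(a_2b_2+a_3b_3)=\delta^2P_{ab}=0$; thus (\ref{formu4}) is in fact an identity requiring no information about $c$ at all. For (\ref{formu5}) the same substitution, after clearing $\delta^2$, leaves $2\delta(a_2t_1+a_3t_2)+\delta^2(\delta-2b_1)$. Here I would use $\delta^2=2b_1\delta-I_b$ a second time to linearize $\delta^2(\delta-2b_1)=-I_b\delta$, reducing the expression to $\delta\bigl(2(a_2t_1+a_3t_2)-I_b\bigr)$. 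A short computation with the definitions of $t_1,t_2$ gives $a_2t_1+a_3t_2=-P_{ac}$, so the bracket equals $-(I_b+2P_{ac})$, which vanishes by hypothesis; since $\delta\neq0$ this yields (\ref{formu5}).

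The calculations are all elementary, so there is no genuine obstacle; the one step that needs care is the bookkeeping in (\ref{formu5}), where the cubic-looking term $\delta^2(\delta-2b_1)$ must be reduced through $\delta^2=2b_1\delta-I_b$ so that the single remaining hypothesis $I_b+2P_{ac}=0$ can be applied. It is worth noting that the listed assumptions $I_c=0$ and $P_{bc}=0$ are never invoked in establishing (\ref{formu1})--(\ref{formu5}); they are part of the standing hypotheses of the surrounding case and will enter only later, in the theorem that consumes this proposition.
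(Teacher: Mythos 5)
Your proof is correct and follows essentially the same route as the paper: both arguments hinge on the relations $a_2b_1+b_3=a_2\delta$ and $b_2-a_3b_1=-a_3\delta$ (which the paper records as $a_2b_1+b_3-a_2\delta=a_3(a_2b_2+a_3b_3)=0$ and $b_2-a_3b_1+a_3\delta=a_2(a_2b_2+a_3b_3)=0$), together with $a_2t_1+a_3t_2=-P_{ac}$ and the hypothesis $I_b+2P_{ac}=0$. The differences are only organizational --- you reach (\ref{formu5}) via the identity $\delta^2=2b_1\delta-I_b$ where the paper instead uses the auxiliary identity $b_3t_1-b_2t_2+P_{ac}(a_2b_3-a_3b_2)=0$ --- and your explicit verification that $\delta\neq 0$ (which the paper only supplies later, in the proof of Theorem \ref{thm5.6}) and your remark that $I_c=0$, $P_{bc}=0$ are never needed are both accurate and welcome additions.
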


\begin{proof}
By $a_2^2+a_3^2=1$ and $a_2b_2+a_3b_3=0$, we can easily verify Eqs.(\ref{formu1})-(\ref{formu3}).
Noting that $b_3(b_2-a_3b_1)-b_2(a_2b_1+b_3)=-b_1(a_2b_2+a_3b_3)=0$, $a_2b_1+b_3-a_2\delta=a_3(a_2b_2+a_3b_3)=0$
 and   $b_2-a_3b_1+a_3\delta=a_2(a_2b_2+a_3b_3)=0$,
  we have
  $$2t_2(a_2b_1+b_3)+2t_1(b_2-a_3b_1)+(2a_3t_1-2a_2t_2)\delta=2(a_2b_1+b_3-a_2\delta)t_2+2(b_2-a_3b_1+a_3\delta)t_1=0.$$
This proves Eq.(\ref{formu4}).
It is obvious that
$$\frac{(a_2b_1+b_3)b_3-b_2(a_3b_1-b_2)}{\delta}-b_1=\frac{b_2^2+b_3^2-b_1^2}{\delta}=\frac{-I_b}{\delta}.$$
By $a_2^2+a_3^2=1$ and $a_2b_2+a_3b_3=0$, we have
$$b_3t_1-b_2t_2+P_{ac}(a_2b_3-a_3b_2)=0.$$
Noting $a_2t_1+a_3t_2=-P_{ac}$ and $-I_b=2P_{ac}$,
 we have $$t_2(a_3b_1-b_2)+t_1(a_2b_1+b_3)+P_{ac}\delta=(a_3t_2+a_2t_1)b_1+b_3t_1-b_2t_2+P_{ac}(a_2b_3-a_3b_2+b_1)=0.$$
This proves Eq.(\ref{formu5}).
\end{proof}

\begin{thm}\label{thm5.6}Consider Equation II with $P_{ab}=0$ and $I_b+2P_{ac}=0$.
Let \begin{equation}\label{condpab02} F=t_1^2+t_2^2+(b_3t_1-b_2t_2)\delta+c_0\delta^2.\end{equation}
 Equation II
is solvable if only if $F=0$.
 If $F=0$ then  Equation II has solutions $$x=x_0+x_1\bi+x_2\bj+x_3\bk,\forall x_0,x_1\in \br,$$  where
   \begin{eqnarray}
  \label{sx2e} x_2&=&-\frac{t_2}{\delta}-\frac{(a_2b_1+b_3)}{\delta}x_0-\frac{(a_3b_1-b_2)}{\delta}x_1,\\
  \label{sx3e}x_3&=&\frac{t_1}{\delta}+\frac{(b_2-a_3b_1)}{\delta}x_0+\frac{(a_2b_1+b_3)}{\delta}x_1.
\end{eqnarray}
\end{thm}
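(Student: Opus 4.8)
The plan is to abandon the $(T,N)$-reduction behind Theorems \ref{thm5.4} and \ref{thm5.5}, since the system (\ref{1ieq1})-(\ref{1ieq2}) degenerates completely in this regime, and instead return to the full real system (\ref{ae1})-(\ref{ae4}) sharpened by the linear relations of Proposition \ref{prop5.1}. Three preliminary facts set the stage. First, because $P_{ab}=0$ forces $I_{2x_0a+b}=4x_0^2I_a+4x_0P_{ab}+I_b=I_b$ for every real $x_0$, and we are in the $SI$ branch with $I_b\neq0$, \emph{every} solution automatically lies in $SI$; so ``solvable'' and ``has an $SI$ solution'' coincide here. Second, feeding $P_{ab}=0$ into the identity $P_{ab}^2+\delta^2=2b_1\delta-I_b$ from Section 3 gives $\delta^2-2b_1\delta+I_b=0$, whence $\delta=0$ would imply $I_b=0$, a contradiction; thus $\delta\neq0$, which is exactly what is needed to divide by $\delta$ in (\ref{sx2e})-(\ref{sx3e}). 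Third, specializing (\ref{1ieq1}) and (\ref{1ieq2}) to $P_{ab}=0$ and $I_b+2P_{ac}=0$ yields the necessary conditions $I_c=0$ and $P_{bc}=0$; together with $I_a=0$ these are precisely the hypotheses of Proposition \ref{prop5.3}, whose identities (\ref{formu1})-(\ref{formu5}) will carry the computation.

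Since $P_{ab}=0$, Proposition \ref{prop5.1} applies and every solution satisfies (\ref{2new3})-(\ref{2new4}). Using $\delta\neq0$, I would solve this $2\times2$ linear pair for $x_2,x_3$ in terms of the still-free parameters $x_0,x_1$, recovering exactly the formulas (\ref{sx2e})-(\ref{sx3e}). It then remains to impose the two genuinely independent equations (\ref{ae1}) and (\ref{ae2}); the combinations producing (\ref{ae3}),(\ref{ae4}) are automatic once (\ref{2new3}),(\ref{2new4}) hold. Substituting (\ref{sx2e})-(\ref{sx3e}) into (\ref{ae2}) and collecting coefficients of $x_0^2,\,x_0x_1,\,x_0,\,x_1$ and the constant, I expect each to vanish: the quadratic coefficients cancel by (\ref{formu2})-(\ref{formu3}); the coefficient of $x_0$ reduces to $(2P_{ac}+I_b)/\delta=0$ via $a_2t_1+a_3t_2=-P_{ac}$ and $I_b+2P_{ac}=0$; the coefficient of $x_1$ is a multiple of $a_2b_2+a_3b_3=-P_{ab}=0$; and the constant equals $P_{bc}/\delta=0$. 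Hence (\ref{ae2}) degenerates to $0=0$ for all $x_0,x_1$. Substituting into (\ref{ae1}) and clearing $\delta^2$, the same identities annihilate every $x_0,x_1$-dependent monomial: the coefficients of $x_0^2,\,x_0x_1,\,x_1^2$ die by (\ref{formu1})-(\ref{formu3}), and those of $x_0,\,x_1$ by (\ref{formu4})-(\ref{formu5}), leaving exactly the constant $t_1^2+t_2^2+(b_3t_1-b_2t_2)\delta+c_0\delta^2=F$ of (\ref{condpab02}). Thus (\ref{ae1}) is equivalent to $F=0$, which yields both directions of the stated equivalence and, when $F=0$, the advertised two-parameter family with $x_0,x_1$ free.

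The main obstacle is the bookkeeping of the final substitution: one must check that after inserting (\ref{sx2e})-(\ref{sx3e}) into (\ref{ae1}) and (\ref{ae2}) every term except the $F$-constant cancels. This cancellation is not accidental but is engineered term-by-term by Proposition \ref{prop5.3}, so the decisive organizational step is to match each surviving coefficient to one of (\ref{formu1})-(\ref{formu5}) (and to the scalar relations $P_{ab}=0$, $I_b+2P_{ac}=0$, $P_{bc}=0$). Once that dictionary is in place, each verification is routine, and the only subtlety worth flagging is that $I_c=0$ and $P_{bc}=0$ are part of the standing hypotheses inherited from Proposition \ref{prop5.3}, so that the equivalence ``solvable iff $F=0$'' is understood within the regime where those necessary conditions already hold.
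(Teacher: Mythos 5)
Your proposal is correct and follows essentially the same route as the paper's proof: deduce $I_c=0$, $P_{bc}=0$, $I_b\neq 0$ from the degenerated $(T,N)$ system, use Proposition \ref{prop5.1} together with $\delta\neq 0$ to express $x_2,x_3$ through the free parameters $x_0,x_1$, and substitute back, letting the identities of Proposition \ref{prop5.3} collapse everything to the single condition $F=0$. If anything, your version is more complete than the paper's: the paper substitutes only into (\ref{ae1}) and never records that (\ref{ae2}) also degenerates to $0=0$ (nor that (\ref{ae3}) and (\ref{ae4}) are then automatic from the combinations defining Proposition \ref{prop5.1}), whereas you check this explicitly, and your stated cancellations for (\ref{ae2}) (the quadratic terms via $P_{ab}=0$ and $(b_1+a_2b_3-a_3b_2)/\delta=1$, the $x_0$-coefficient equal to $(2P_{ac}+I_b)/\delta=0$, the $x_1$-coefficient a multiple of $P_{ab}=0$, and the constant equal to $P_{bc}/\delta=0$) are all correct.
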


\begin{proof}
Suppose that there is a solution $x\in SI$.
By Eq.(\ref{ieq1}) and Eq.(\ref{ieq2}), if $I_b+2P_{ac}=0$ then $$I_c=0,P_{bc}=0,I_b\neq 0.$$
In this special case, although $2x_0a+b\in \bh_s-Z(\bh_s)$,
  Eq.(\ref{ieq1}) and Eq.(\ref{ieq2})  provide no information about $N$ and $T$. So we return to the original equation.

  By Proposition \ref{prop5.1}, under the condition  $P_{ab}=0$ and $I_a=0$, we have
      \begin{eqnarray}
  (b_2-a_3b_1)x_0+(a_2b_1+b_3)x_1-\delta x_3+t_1=0,\label{6new3}\\
 (a_2b_1+b_3)x_0+(a_3b_1-b_2)x_1+\delta x_2+t_2=0.\label{6new4}
\end{eqnarray}
 Since $P_{ab}=0,a_2^2+a_3^2=1$ and $I_b\neq 0$,
  we obtain $$b_1^2-(a_3b_2-a_2b_3)^2=b_1^2-b_2^2-b_3^2+(a_2b_2+a_3b_3)^2=I_b\neq 0.$$
  This means $\delta=a_2b_3-a_3b_2+b_1\neq 0$.
  So we have
      \begin{eqnarray*}
   x_2=-\frac{(a_2b_1+b_3)}{\delta}x_0-\frac{(a_3b_1-b_2)}{\delta}x_1-\frac{t_2}{\delta},\\
  x_3=\frac{(b_2-a_3b_1)}{\delta}x_0+\frac{(a_2b_1+b_3)}{\delta}x_1+\frac{t_1}{\delta}.
\end{eqnarray*}
  Substituting  the above two formulas of $x_2$ and $x_3$ in Eq. (8),
   that is,  $$x_0^2-x_1^2+x_2^2+x_3^2+2a_2x_0x_2+2a_3x_0x_3-b_1x_1+b_2x_2+b_3x_3+c_0=0,$$
  we obtain
   \begin{eqnarray*}
  &&\Big[1+\frac{(a_2b_1+b_3)^2+(b_2-a_3b_1)^2}{\delta^2}+2\frac{a_3(b_2-a_3b_1)-a_2(a_2b_1+b_3)}{\delta}\Big]x_0^2\\
  &&+\Big[\frac{(a_2b_1+b_3)^2+(b_2-a_3b_1)^2}{\delta^2}-1\Big]x_1^2+\frac{a_3(a_2b_1+b_3)+a_2(b_2-a_3b_1)}{\delta}x_0x_1\\
    &&+\Big[\frac{2t_2(a_2b_1+b_3)+2t_1(b_2-a_3b_1)}{\delta^2}+\frac{b_3(b_2-a_3b_1)-b_3(a_2b_1+b_3)+2a_3t_1-2a_2t_2}{\delta}\Big]x_0\\
    &&+\Big[\frac{2t_2(a_3b_1-b_2)+2t_1(a_2b_1+b_3)}{\delta^2}+\frac{(a_2b_1+b_3)b_3-b_2(a_3b_1-b_2)}{\delta}-b_1\Big]x_1\\
    &&+\frac{t_1^2+t_2^2}{\delta^2}+\frac{b_3t_1-b_2t_2}{\delta}+c_0=0.\end{eqnarray*}
    By Proposition \ref{prop5.3}, if $F=0$ then  the above equation is an identity.
  Thus Equation II has solutions   $$x=x_0+x_1\bi+x_2\bj+x_3\bk,\forall x_0,x_1\in \br,$$  where
$x_2$ and $x_3$ are given by (\ref{sx2e}) and (\ref{sx3e}).
\end{proof}

\begin{exam}\label{exam5.9}
	Consider the quadratic equation $(1+\bj)x^2 +(2\bi+\bk)x-\frac{3}{4}+\frac{3}{4}\bj= 0$.
 That is, $a=1+\bj,b=2\bi+\bk$ and $c=-\frac{3}{4}+\frac{3}{4}\bj$.
  It is obvious that $P_{ab}=0,I_c=0,P_{bc}=0,I_b+2P_{ac}=0,I_b\neq 0$. Then
$\delta=3,t_1=\frac{3}{2},t_2=0, F=0$ and $$x=x_0+x_1\bi-x_0\bj+(x_1+\frac{1}{2})\bk,\forall x_0,x_1\in \br.$$
\end{exam}

\section{Verification of our examples  by companion polynomial approach}

We mention that Proposition \ref{pro1.1} still holds. We restate it as the following lemma.

\begin{lem}(c.f.\cite[Theorem 3.8]{Irene}) If $q\in Z(f)$ with $p(z)$ given by (\ref{eqcons}),   then $\Psi_{[q]}(x)$ is a divisor of $c(z)$ given by  (\ref{czpoly}) in complex number field. That is $$C(x)=Q(x)\Psi_{[q]}(x), Q(x)\in \bc[x].$$
\end{lem}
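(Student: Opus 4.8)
The plan is to show that the real quadratic $\Psi_{[q]}(x)=x^2-2\Re(q)x+I_q$ leaves no remainder upon dividing the cubic $c(x)$ of (\ref{czpoly}), all of whose coefficients $2P_{ab},\,2P_{ac}+I_b,\,2P_{bc},\,I_c$ are real. Writing $T=q+\bar q=2\Re(q)$ and $N=\bar q q=I_q$, the defining relation $\Psi_{[q]}(q)=0$ gives $q^2=Tq-N$, so I would perform the division $c(x)=Q(x)\Psi_{[q]}(x)+\alpha x+\beta$ inside $\br[x]$ and read off the remainder by reducing powers of $x$ modulo $\Psi_{[q]}$. Using $x^2\equiv Tx-N$ and $x^3\equiv(T^2-N)x-TN$, the coefficients of $c(x)=2P_{ab}x^3+(2P_{ac}+I_b)x^2+2P_{bc}x+I_c$ collapse to
$$\alpha=2P_{ab}T^2+(2P_{ac}+I_b)T-2NP_{ab}+2P_{bc},\qquad \beta=I_c-N(2TP_{ab}+2P_{ac}+I_b).$$
Since $Q(x)$ then automatically has real, hence complex, coefficients, the lemma reduces to proving the two scalar identities $\alpha=0$ and $\beta=0$.

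Both identities will follow from the hypothesis $p(q)=0$ after linearization. Substituting $q^2=Tq-N$ into $aq^2+bq+c=0$ yields the single split-quaternionic equation $(aT+b)q=aN-c$, exactly as in (\ref{linearx1}). For $\beta=0$ I would take norms of both sides and use multiplicativity $I_{yx}=I_yI_x$ together with $I_a=0$: this gives $I_{aT+b}\,N=I_{aN-c}$, and the two norms evaluate to $I_{aT+b}=2TP_{ab}+I_b$ and $I_{aN-c}=I_c-2NP_{ac}$, which is precisely $\beta=0$.

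For $\alpha=0$ I would multiply $(aT+b)q=aN-c$ on the left by the conjugate $\overline{aT+b}=T\bar a+\bar b$, obtaining $(2TP_{ab}+I_b)\,q=-T\bar a c+N\bar b a-\bar b c$ (again using $I_a=0$); crucially this identity holds whether or not $aT+b$ is invertible, so no case split between $SZ$ and $SI$ is needed. Adding the conjugate of this identity and invoking $q+\bar q=T$ together with $\bar a c+\bar c a=2P_{ac}$, $\bar a b+\bar b a=2P_{ab}$, $\bar b c+\bar c b=2P_{bc}$ produces $(2TP_{ab}+I_b)T=-2TP_{ac}+2NP_{ab}-2P_{bc}$, which rearranges to $\alpha=0$. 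With $\alpha=\beta=0$ the remainder vanishes and $c(x)=Q(x)\Psi_{[q]}(x)$ with $Q\in\br[x]\subset\bc[x]$.

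The only genuine subtlety, and the place I would be most careful, is that noncommutativity forbids the naive identity $c(q)=\overline{p(q)}\,p(q)=0$: evaluating the product $p(x)\overline{p(x)}$ at $q$ does not factor through the value $p(q)$. Passing instead through the real-coefficient remainder $\alpha x+\beta$ sidesteps this entirely, since real scalars commute with $q$, and the two scalar equations $\alpha=0,\ \beta=0$ are extracted from $p(q)=0$ purely by the norm and trace operations above. I note that these are exactly the two equations of the real system (\ref{rsym1}) evaluated at $(T,N)=(2\Re(q),I_q)$, so the argument simultaneously re-derives that system for every root, including those in $SZ$.
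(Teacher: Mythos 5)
Your proof is correct, but it takes a genuinely different route from the paper's. The paper's argument is structural and essentially one line: citing \cite[Section 3.4]{sch20}, it factors $p(x)=h(x)(x-q)$ with $h(x)\in\bh_s[x]$, whence
$$c(x)=p(x)\overline{p(x)}=h(x)(x-q)(x-\bar q)\,\overline{h(x)}=h(x)\overline{h(x)}\,\Psi_{[q]}(x),$$
the real quadratic $(x-q)(x-\bar q)=\Psi_{[q]}(x)$ being central so that it slides past $\overline{h(x)}$, and $Q=h\overline{h}$ has real coefficients; this argument is uniform in the degree of $p$. You instead verify divisibility directly: since $c$ and $\Psi_{[q]}$ both lie in $\br[x]$, you compute the Euclidean remainder $\alpha x+\beta$ (your reductions $x^2\equiv Tx-N$, $x^3\equiv(T^2-N)x-TN$ and the resulting $\alpha,\beta$ are right) and then prove the scalar identities $\alpha=\beta=0$ from the linearization $(aT+b)q=aN-c$ of (\ref{linearx1}), using multiplicativity of $I$ for $\beta$ and left multiplication by $\overline{aT+b}$ followed by adding conjugates for $\alpha$; all of this correctly exploits $I_a=0$, which holds by (\ref{eqcons}). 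Your route buys two things the paper's does not: it is self-contained, bypassing the factorization theorem of \cite{sch20}, and it shows that \emph{both} equations of the real system (\ref{rsym1}) hold for every root, including roots in $SZ$ --- note that the paper's own derivation of (\ref{enf2}) divides by $2TP_{ab}+I_b$ and so requires $Ta+b$ invertible, whereas your conjugate-multiplication trick does not. What you give up is generality: the paper's proof works for $p$ of arbitrary degree, while yours is tied to the cubic shape of (\ref{czpoly}). Your closing remark is also on target: evaluation at $q$ is not a ring homomorphism, so $c(q)\neq\overline{p(q)}\,p(q)$ in general, and that is exactly why the paper routes through factorization while you route through the real-coefficient remainder.
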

\begin{proof}
	If $q\in Z(f)$ then by \cite[Section 3.4]{sch20} there exist $h(x)\in \bh_s[x]$ such that $f(x)=h(x)(x-q)$.
	Hence $C(x)=f(x)\overline{f(x)}=h(x)(x-q)(x-\bar{q})\overline{h(x)}=h(x)\overline{h(x)}\Psi_{[q]}(x).$  Letting $Q(x)=h(x)\overline{h(x)}$ completes the proof.
\end{proof}

\begin{lem}\label{lemge}
	Let $a,d\in \bh_s$. Then the equation $ax=d$ is  solvable
	if and only if $aa^+d=d$,
	in which case all solutions are given by $$x=a^+d+(1-a^+a)y,\quad \forall y=y_0+y_1\bi+y_2\bj+y_3\bk\in \bh_s \mbox{ with } y_i\in \br.$$
\end{lem}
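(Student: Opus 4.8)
The plan is to treat this as the standard generalized-inverse solution theory, exactly as in Lemma \ref{lemmoor} but now for an arbitrary $a \in \bh_s$ rather than a nonzero zero divisor. The single algebraic fact that the whole argument rests on is the Moore--Penrose identity $a a^+ a = a$, which holds for every $a = t_1 + t_2\bj \in \bh_s$: when $a = 0$ we have $a^+ = 0$ and the identity is trivial; when $I_a \neq 0$ we have $a^+ = \bar a / I_a = a^{-1}$, so $a a^+ a = a$; and when $a \in Z(\bh_s) - \{0\}$ the relations $a a^+ a = a$ and $a^+ a a^+ = a^+$ are already recorded after the definition of $a^+$ in Section 2 (going back to \cite{cao}). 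Thus I may use $a a^+ a = a$ freely, being careful only to multiply consistently on the left because $\bh_s$ is non-commutative.

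First I would prove the solvability criterion. For necessity, suppose $a x_0 = d$ for some $x_0 \in \bh_s$. Multiplying on the left by $a a^+$ and using $a a^+ a = a$ gives $a a^+ d = a a^+ (a x_0) = (a a^+ a) x_0 = a x_0 = d$, so the condition $a a^+ d = d$ is forced. For sufficiency, assume $a a^+ d = d$; then $x = a^+ d$ is a solution, since $a (a^+ d) = (a a^+) d = d$ by hypothesis. This establishes the equivalence ``$ax=d$ is solvable if and only if $a a^+ d = d$''.

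Next I would verify the description of the full solution set under the standing hypothesis $a a^+ d = d$. Every element of the proposed form is a solution: for any $y \in \bh_s$,
\[
a\bigl(a^+ d + (1 - a^+ a) y\bigr) = a a^+ d + (a - a a^+ a) y = d + (a - a) y = d,
\]
again invoking $a a^+ a = a$. Conversely, if $x$ is any solution, i.e. $a x = d$, then the choice $y = x$ recovers it:
\[
a^+ d + (1 - a^+ a) x = a^+ d + x - a^+ (a x) = a^+ d + x - a^+ d = x,
\]
so $x$ lies in the claimed set. Hence the solution set is exactly $\{\, a^+ d + (1 - a^+ a) y : y \in \bh_s \,\}$, and writing $y = y_0 + y_1\bi + y_2\bj + y_3\bk$ with $y_i \in \br$ merely emphasizes that $y$ ranges over all of $\bh_s$.

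There is essentially no hard step here: the proof is a verbatim transcription of the matrix generalized-inverse argument. The only point demanding care is the non-commutativity of $\bh_s$, which I handle by never multiplying on the right and by using the left-sided identity $a a^+ a = a$ throughout; the companion identity $a^+ a a^+ = a^+$ is not even needed. The sole bookkeeping ``obstacle'' is confirming $a a^+ a = a$ across the three regimes of the piecewise definition of $a^+$, which is immediate in each case.
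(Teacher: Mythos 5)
Your proof is correct, but it takes a genuinely different route from the paper's. The paper disposes of the lemma by a three-way case split: if $a$ is invertible then $a^+=a^{-1}$, so $1-a^+a=0$ and $x=a^{-1}d$ is the unique solution; the case $a=0$ is dismissed as obvious; and the only substantial case, $a$ a nonzero zero divisor, is referred back to Lemma \ref{lemmoor}, which the paper itself quotes from \cite{cao} without proof. You instead give a single uniform generalized-inverse argument valid for every $a\in\bh_s$ at once, resting solely on the identity $aa^+a=a$, which you verify separately in the three regimes of the piecewise definition of $a^+$ (trivial for $a=0$, immediate for $I_a\neq 0$, and recorded in Section 2 for $a\in Z(\bh_s)-\{0\}$). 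Your four one-line computations --- necessity via left-multiplying $ax_0=d$ by $aa^+$, sufficiency via $x=a^+d$, closure of the affine family $a^+d+(1-a^+a)y$ under the equation, and recovery of an arbitrary solution by taking $y=x$ --- are all consistently left-sided, so non-commutativity causes no difficulty. What your approach buys: it is self-contained (it effectively reproves Lemma \ref{lemmoor} rather than citing it), it never needs the companion identity $a^+aa^+=a^+$, and it shows the statement holds in any algebra in which $a$ admits an inner inverse. What the paper's approach buys: brevity, since the nontrivial content is already packaged in the previously stated Lemma \ref{lemmoor} and the invertible and zero cases are immediate.
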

\begin{proof}If $a$ is invertible, then $1-a^+a=0$ and  $x=a^{-1}d$. It is obvious for the case $a=0$.  The case of $a$ being noninvertible is the same as Lemma \ref{lemmoor}.
\end{proof}
Let \begin{equation}\label{sab}S(a,d)=\{x\in \bh_s:ax=d\}.\end{equation}

\begin{thm}\label{thmcom} Suppose that  the companion polynomial of Equation II
	\begin{equation}\label{czpoly1}C(x)=2P_{ab}x^3+(2P_{ac}+I_b)x^2+2P_{bc}x+I_c\not\equiv 0\end{equation}
 Let $\Psi_{[q]}(x)=x^2-Tx+N$ be a divisor of $C(x)$.  Then  the set of solutions of Equation  II is
	$$Z(f)=\bigcup_{[q]} \{S(Ta+b,aN-c)\cap [q]\}.$$
\end{thm}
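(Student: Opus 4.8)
The plan is to establish the set equality $Z(p)=\bigcup_{[q]}\{S(Ta+b,aN-c)\cap[q]\}$ by two inclusions, the engine in both directions being the linearization recorded in \eqref{linearx1}: for any $x=x_0+x_1\bi+x_2\bj+x_3\bk$, the identity $x^2=2x_0x-I_x$ turns $ax^2+bx+c=0$ into $(2x_0a+b)x=aI_x-c$. Writing $T=x+\bar x=2\Re(x)$ and $N=I_x$ as in \eqref{Nf1}--\eqref{Tf1}, this reads $(Ta+b)x=aN-c$, i.e. $x\in S(Ta+b,aN-c)$. The crucial observation is that, inside a single quasisimilar class $[q]$, the two scalars $T$ and $N$ are frozen at the class invariants $2\Re(q)$ and $I_q$, which are exactly the coefficients of $\Psi_{[q]}(x)=x^2-Tx+N$.

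For the inclusion $Z(p)\subseteq\bigcup_{[q]}\{S(Ta+b,aN-c)\cap[q]\}$, I would take an arbitrary root $q\in Z(p)$. By the divisibility lemma restating Proposition \ref{pro1.1}, $\Psi_{[q]}$ divides $c(x)$, so $[q]$ genuinely indexes the union; set $T=2\Re(q)$ and $N=I_q$. Substituting $q^2=Tq-N$ into $aq^2+bq+c=0$ gives $(Ta+b)q=aN-c$, so $q\in S(Ta+b,aN-c)$, while trivially $q\in[q]$; hence $q$ lies in the displayed union.

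For the reverse inclusion, I would take $x\in S(Ta+b,aN-c)\cap[q]$ with $\Psi_{[q]}\mid c$, so that $T=2\Re(q)$ and $N=I_q$. Membership $x\in[q]$ forces $2\Re(x)=2\Re(q)=T$ and $I_x=I_q=N$, so the relation $(Ta+b)x=aN-c$ becomes $(2\Re(x)\,a+b)x=aI_x-c$; running the identity $x^2=2\Re(x)x-I_x$ backwards then yields $ax^2+bx+c=0$, i.e. $x\in Z(p)$. The intersection with $[q]$ is indispensable here: an element of $S(Ta+b,aN-c)$ whose real part or norm differs from the class values need not solve the quadratic, and discarding it is exactly what $\cap[q]$ does. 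In practice each set $S(Ta+b,aN-c)$ is then produced explicitly by Lemma \ref{lemge}, which covers uniformly both the invertible case $Ta+b\notin Z(\bh_s)$ and the zero-divisor case $Ta+b\in Z(\bh_s)$.

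The hard part is conceptual rather than computational, and it is carried entirely by the hypothesis $c(x)\not\equiv0$. Only when $c$ is a genuine nonzero real cubic do its monic real quadratic factors $x^2-Tx+N$ form a finite list, each corresponding to one class through $(\Re(q),I_q)=(T/2,N)$; and in $\bh_s$ every such real pair is realized, since with $q_0=T/2$ fixed the equation $I_q=q_0^2+q_1^2-q_2^2-q_3^2=N$ is solvable for any real $N$ (unlike in $\bh$, the sign of $T^2-4N$ is no obstruction). Thus the forward inclusion legitimately lands in a finite union. Finally I would note that spurious factors cause no harm: if $\Psi_{[q]}\mid c$ but $[q]$ contains no root, then $S(Ta+b,aN-c)\cap[q]=\emptyset$ and contributes nothing, so equality -- not merely inclusion -- holds.
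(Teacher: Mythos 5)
Your proposal is correct and follows essentially the same route as the paper: both hinge on the linearization $x^2=2\Re(x)x-I_x$, which converts $ax^2+bx+c=0$ into the linear equation $(Ta+b)x=aN-c$ with $T=2\Re(x)$, $N=I_x$, together with the divisibility lemma (the restatement of Proposition \ref{pro1.1}) to index the union and Lemma \ref{lemge} to describe each set $S(Ta+b,aN-c)$. If anything, your write-up is more complete than the paper's own proof, which explicitly records only the forward inclusion $Z(p)\subseteq\bigcup_{[q]}\{S(Ta+b,aN-c)\cap[q]\}$ and compresses the reverse inclusion (where membership in $[q]$ freezes $T$ and $N$ so the linear relation can be run backwards to recover the quadratic) into the single phrase ``By Lemma \ref{lemge}, we get the result.''
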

\begin{proof}
Let $q\in Z(f)$ and $T=2\Re(q),N=I_q=q\bar{q}$. Then we have
$$(Ta+b)q=aN-c.$$
Thus $q\in S(Ta+b,aN-c)$. By Lemma \ref{lemge}, we get the result.
\end{proof}

By computation, we know that the companion polynomials of Equation II in Examples 3.4, 3.5, 3.6 and 4.3 are $C(x)=0$.  We will apply Theorem \ref{thmcom} to our Examples 3.1, 3,2, 3.3 and 4.2. In these examples, we have checked that for each pair $(T,N)$ the equations $(Ta+b)x=aN-c$ are solvable.

 We present our verification procedure as follows:
\begin{itemize}
	\item[(1)]

In Example 3.1, we have  $C(x)=-4(x+\frac{1}{2})^3$, one pair $(T, N)=(-1,\frac{1}{4})$ and $$Ta+b=-1+\bi+\bj+\bk\in Z(\bh_s),\, Na-c=\frac{1}{2}-\frac{5}{2}\bi-\frac{1}{2}\bj-\frac{5}{2}\bk.$$
By  Lemma \ref{lemge}, we have
$$S(Ta+b,Na-c)=\{-\frac{3}{4}+\frac{1}{2}(y_0+y_3)+[\frac{1}{2}+\frac{1}{2}(y_1+y_2)]\bi+[-\frac{1}{2}+\frac{1}{2}(y_1+y_2)]\bj+[\frac{3}{4}+\frac{1}{2}(y_0+y_3)]\bk\}.
$$Hence
$$Z(f)=S(Ta+b,Na-c)\cap  \{x\in \bh_s: \Re(x)=\frac{-1}{2}, I_x=\frac{1}{4}\}=\{-\frac{1}{2}+\bi+\bk\}.$$

	\item[(2)]In Example 3.2(alos in  Example 4.1), we have $C(x)=-2(x-1)(x+1)^2$,  two pairs $$(T, N)=(-2,1),\,\, (T, N)=(0,-1).$$
	
For the first pair, we have $$Ta+b=-2+\bi-\bj\in \bh_s- Z(\bh_s),\, Na-c=2-\bi+\bj.$$ By  Lemma \ref{lemge}, we have $$S(Ta+b,Na-c)=\{-1\}
$$
and $$S(Ta+b,Na-c)\cap  \{x\in \bh_s: \Re(x)=-1, I_x=1\}=\{-1\}\subset Z(f).$$

For the second pair, we have $$Ta+b=\bi+\bj\in Z(\bh_s),\,Na-c=-\bi-\bj.$$
By  Lemma \ref{lemge}, we have
$$S(Ta+b,Na-c)=\{-\frac{1}{2}+\frac{1}{2}(y_0+y_3)+\frac{1}{2}(y_1+y_2)\bi+\frac{1}{2}(y_1+y_2)\bj+[\frac{1}{2}+\frac{1}{2}(y_0+y_3)]\bk\}.
$$
Hence
$$S(Ta+b,Na-c)\cap   \{x\in \bh_s: \Re(x)=0, I_x=-1\}=\{x_1\bi+x_1\bj+\bk,\forall x_1\in \br\}.$$
Therefore we have
$$Z(f)=\{-1\}\cup \{x=x_1\bi+x_1\bj+\bk,\forall x_1\in \br\}.$$

	\item[(3)]
In Example 3.3, we have  $C(x)=2(x^2-x+1)$, one pair $(T, N)=(1,1)$ and $$Ta+b=1+\bi+\bj+\bk\in Z(\bh_s),\,Na-c=\bi+\bj.$$ By  Lemma \ref{lemge}, we have $$S(Ta+b,Na-c)=\{\frac{1}{4}+\frac{1}{2}(y_0-y_2)+[\frac{1}{4}+\frac{1}{2}(y_1+y_3)]\bi+[\frac{1}{4}+\frac{1}{2}(y_2-y_0)]\bj+[-\frac{1}{4}+\frac{1}{2}(y_1+y_3)]\bk\}.
$$
$$Z(f)=S(Ta+b,Na-c)\cap  \{x\in \bh_s: \Re(x)=\frac{1}{2}, I_x=1\}=\{\frac{1}{2}+\bi+\frac{1}{2}\bk \}.$$

	\item[(4)]
In Example 4.2, we have  $C(x)=(x-1)(x+3)$, one pair $(T, N)=(-2,-3)$ and $$Ta+b=-2+2\bi-2\bj+\bk\in \bh_s-Z(\bh_s),\, Na-c=-4-\bi-5\bj-\bk.$$ By  Lemma \ref{lemge}, we have
 $$S(Ta+b,Na-c)=\{-1+\frac{17}{3}\bi+\frac{1}{3}\bj+6\bk\}
$$
Hence
$$Z(f)=\{-1+\frac{17}{3}\bi+\frac{1}{3}\bj+6\bk\}.$$
\end{itemize}

\vspace{2mm}
{\bf Acknowledgments.}\quad
 This work is supported by Natural Science Foundation of China (11871379) and  Key project of  National Natural Science Foundation  of Guangdong Province Universities (2019KZDXM025).

\end{document}